\newtheorem{thm}{Theorem}[section]
\newtheorem{lem}[thm]{Lemma}
\theoremstyle{definition}
\newtheorem{defn}[thm]{Definition}
\numberwithin{equation}{section}
\begin{document}

\title[Asymptotic behavior of complex earthquakes and Teich. disks]{On the asymptotic behavior of complex earthquakes and Teichm\"{u}ller disks}


\author{Subhojoy Gupta}
\address{Center for Quantum Geometry of Moduli Spaces, Ny Munkegade 118, DK 8000 Aarhus C, Denmark.}
\curraddr{California Institute of Technology, Mathematics 253-37, Pasadena CA 91125 USA}
\email{subhojoy@caltech.edu}
\thanks{This work was (partly) supported by the Danish Research Foundation DNRF95 (Center for Quantum Geometry of Moduli Spaces - QGM)}

\subjclass[2010]{Primary 30F60, 32G15, 57M50} 

\date{\today}

\begin{abstract}
Given a hyperbolic surface and a simple closed geodesic on it, complex-twists along the curve produce a holomorphic family of deformations in Teichm\"{u}ller space,  degenerating to the Riemann surface where it is pinched. We show there is a corresponding Teichm\"{u}ller disk such that the two are strongly asymptotic, in the Teichm\"{u}ller metric, around the noded Riemann surface. We establish a similar comparison with plumbing deformations that open the node.

\end{abstract}

\maketitle

\section{Introduction}
Let $S$ be a closed oriented surface of genus $g\geq 2$. The Teichm\"{u}ller space $\mathcal{T}_g$ is the space of marked Riemann surfaces of genus $g$, or equivalently, marked hyperbolic structures on $S$.  By a result of Bers this space embeds as a bounded domain in a finite dimensional complex space, and acquires a complex structure and a non-degenerate Kobayashi metric. This \textit{Teichm\"{u}ller metric} $d_\mathcal{T}$ is complete and any complex geodesic admits a description in terms of linear maps in the singular-flat structure induced by a holomorphic quadratic differential (see \S 2). In this article we consider certain  one-complex-parameter families of deformations in $\mathcal{T}_g$ that arise from its hyperbolic and conformal descriptions, and examine their asymptotic behavior.\\

Given a hyperbolic surface $X$ and a geodesic loop $\gamma$, a \textit{complex earthquake} or  \textit{complex-twist } deforms by a ``twist"  and ``graft"  along $\gamma$, determined, respectively,  by the real and imaginary parts  of a complex parameter $z\in {\mathbb{H}_{\geq 0}}=\mathbb{H} \cup \mathbb{R}$. This produces a holomorphic immersion $\mathcal{E}_\gamma: {\mathbb{H}_{\geq 0}} \to \mathcal{T}_g$ such that $\mathcal{E}_\gamma(0) = X$. On the other hand, given a Riemann surface $Y$ and a \textit{Jenkins-Strebel differential} associated with $\gamma$ one has a totally-geodesic \textit{Teichm\"{u}ller disk} $\mathcal{D}_\gamma: \mathbb{H}  \to \mathcal{T}_g$ that maps $i\mapsto Y$ and deforms by a ``shear"  and ``stretch"  of the corresponding metric cylinder. We provide descriptions of both in \S2.\\

In this article we prove the following asymptoticity:

\begin{thm}\label{thm:thm1}
Given a hyperbolic surface $X$ and simple closed geodesic $\gamma$ there is a Riemann surface $Y$ such that the corresponding complex-earthquake deformations $\mathcal{E}_\gamma$ and Teichm\"{u}ller disk $\mathcal{D}_\gamma$ as above are strongly asymptotic in a horodisk around $\infty \in \bar{\mathbb{H}}$. That is, for any $\epsilon>0$ there is an $H>0$ such that
\begin{equation*}
d_\mathcal{T}(\mathcal{E}_\gamma(z), \mathcal{D}_\gamma(z)) <\epsilon
\end{equation*}
for all $z\in \mathbb{H}$ satisfying $Im(z) > H$. Similarly, given a Teichm\"{u}ller disk  $\mathcal{D}_\gamma$ determined by a Jenkins-Strebel differential associated to $\gamma$, there is a complex earthquake disk $\mathcal{E}_\gamma$ that is asymptotic to it as above.
\end{thm}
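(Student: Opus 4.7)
The plan is to identify $Y$ explicitly as a plumbing of the noded limit of $X$ with a flat cylinder, and then to build quasiconformal maps between $\mathcal{E}_\gamma(z)$ and $\mathcal{D}_\gamma(z)$ whose dilatations tend to $1$ as $\mathrm{Im}(z)\to\infty$. Let $\ell=\ell_X(\gamma)$, and let $X^\infty$ be the noded Riemann surface obtained by pinching $\gamma$ in $X$, together with canonical horocyclic coordinates at the resulting pair of cusps. I take $Y$ to be the plumbing of $X^\infty$ at this pair of cusps with parameter $t$ chosen so that the inserted annulus has modulus $1/\ell$ and its argument aligns the horizontal foliation of the resulting Jenkins--Strebel differential with the zero-twist convention for $\mathcal{E}_\gamma$ at $z=0$. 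By construction the canonical Jenkins--Strebel differential $q_Y$ on $Y$ associated with $\gamma$ has a horizontal cylinder of circumference $\ell$ and height $1$, so $\mathcal{D}_\gamma(iy)$ has cylinder of modulus $y/\ell$, exactly matching the grafted Euclidean cylinder of $\mathcal{E}_\gamma(iy)$.

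The heart of the argument is the comparison along the imaginary axis. For each large $y$, both $\mathcal{E}_\gamma(iy)$ and $\mathcal{D}_\gamma(iy)$ contain a flat cylinder of circumference $\ell$ and modulus $y/\ell$; I call them $C_y^{\mathcal{E}}$ and $C_y^{\mathcal{D}}$. The key claim is that, after removing a bounded sub-cylinder from the middle of each, the remaining marked Riemann surfaces with boundary converge conformally to the same limit, namely $X^\infty$ with a fixed pair of standard cusp neighborhoods excised. On the Jenkins--Strebel side this is immediate from the plumbing construction of $Y$ together with the affine action of the Teichm\"{u}ller flow on $C_y^{\mathcal{D}}$. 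On the earthquake side it uses the fact that grafting does not change the hyperbolic structure of $X\setminus\gamma$, combined with a collar-type estimate converting a long flat cylinder embedded in the uniformization of $\mathcal{E}_\gamma(iy)$ into a standard cusp coordinate at each puncture of $X^\infty$. I then assemble $\phi_y:\mathcal{E}_\gamma(iy)\to\mathcal{D}_\gamma(iy)$ that is the natural affine identification on the bulk of the cylinders (conformal up to the common twist), a near-isometry between the two complements, and an interpolation on a bounded-modulus transition annulus. The dilatations add up to give $K(\phi_y)\to 1$, hence $d_\mathcal{T}(\mathcal{E}_\gamma(iy),\mathcal{D}_\gamma(iy))\to 0$.

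To handle $z=x+iy$ with $x\neq 0$ I exploit the equivariance $f(z+1)=T_\gamma\cdot f(z)$ satisfied by both families under the Dehn twist, which is an isometry of $(\mathcal{T}_g,d_\mathcal{T})$, reducing to the compact strip $x\in[0,1]$; inside the strip the extra real-parameter shear is absorbed into the affine map on the cylinder without affecting the estimate on the complement, yielding the strong asymptoticity on the full horodisk $\mathrm{Im}(z)>H$. The converse direction is symmetric: from $(Y,q_Y)$ one reads off its noded limit $Y^\infty$ and the cylinder circumference $c$, and takes $X$ to be any hyperbolic surface with $X^\infty=Y^\infty$ and $\ell_X(\gamma)=c$, which exists by the Fenchel--Nielsen parameterization of the augmented Teichm\"{u}ller space.

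The main obstacle will be making the convergence of the complements quantitative so that $K(\phi_y)-1\to 0$, rather than merely bounded. On the earthquake side this is the statement that, as the grafted cylinder grows, the conformal neighborhood of each copy of $\gamma$ in $X\setminus\gamma$ extends into a genuine cusp coordinate up to an error decaying with $y$; equivalently, that Minsky's product-regions decomposition near the $\gamma$-stratum is \emph{strongly} asymptotic along the two horospherical families under study, not just bi-Lipschitz. Correctly choosing the argument of the plumbing parameter $t$ so that the twist direction of $\mathcal{E}_\gamma$ at $x=0$ matches the horizontal direction of $q_Y$ is the other delicate point, and is what pins down $Y$ (and its Teichm\"{u}ller disk) uniquely given $(X,\gamma)$.
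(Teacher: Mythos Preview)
Your overall strategy coincides with the paper's: both $\mathcal{E}_\gamma$ and $\mathcal{D}_\gamma$ share a common conformal limit as $\mathrm{Im}(z)\to\infty$, each surface decomposes into a long flat cylinder together with a complement that stabilizes, and one assembles a quasiconformal map from an affine identification on the cylinders plus a near-conformal map on the complements, interpolated on a collar. That is exactly what the paper does.

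Where your proposal is shakier than the paper is in the construction of $Y$ and in the technical content you defer. You take $Y$ to be a plumbing of $X^\infty$ in \emph{horocyclic} coordinates with a parameter chosen so that the inserted annulus has modulus $1/\ell$, and then assert ``by construction'' that the Jenkins--Strebel cylinder on $Y$ has circumference $\ell$ and height $1$. That does not follow: the modulus of the Jenkins--Strebel cylinder is the extremal modulus of $\gamma$ on $Y$, not the modulus of whatever annulus you plumb in, and the horocyclic coordinate is not the natural coordinate for the flat structure. The paper avoids this by applying Strebel's theorem directly to the noded limit $X_\infty$ with prescribed residue $\ell(\gamma)/2\pi$; this produces a singular flat surface $Y_\infty$ conformally equivalent to $X_\infty$ whose half-infinite cylinders already have circumference $\ell(\gamma)$, and $Y_0$ is then obtained by truncating and regluing those cylinders at height $1$. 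This is what makes the $t$-twist on the grafted cylinder match the $t$-shear on the Jenkins--Strebel surface (the paper's Lemma~3.4), and it bypasses any need to tune a plumbing parameter.

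The other place where the paper does real work that you only name is the passage from the conformal map $g:X_\infty\to Y_\infty$ to the desired quasiconformal map between the truncated surfaces. Two issues arise that your sketch does not address: the restriction of $g$ to each half-infinite cylinder may \emph{twist} around the puncture (finitely many times, by a distortion argument---the paper's Lemma~3.5), and it is not an isometry on the cylinder ends. The paper handles the first by post-composing with a bounded power of a Dehn twist supported on a long sub-annulus (near-conformal by Lemma~3.7), and the second by an interpolation lemma that replaces $g$ by a pure translation $z\mapsto z+e_i$ beyond some depth. Your ``interpolation on a bounded-modulus transition annulus'' and ``argument alignment of the plumbing parameter'' are gestures toward these, but the Dehn-twist correction in particular is not something you can absorb into the choice of $Y$: the integer $N$ depends on the global conformal map $g$, not on local coordinates at the cusp.

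Finally, for the converse direction the paper invokes a theorem of Mondello to realize the noded limit $Y_\infty$ as an infinitely-grafted surface $X_\infty$; your appeal to Fenchel--Nielsen coordinates does not obviously produce a hyperbolic $X$ whose grafting limit is conformally the given $Y_\infty$.
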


\begin{figure}
  \centering
  \includegraphics{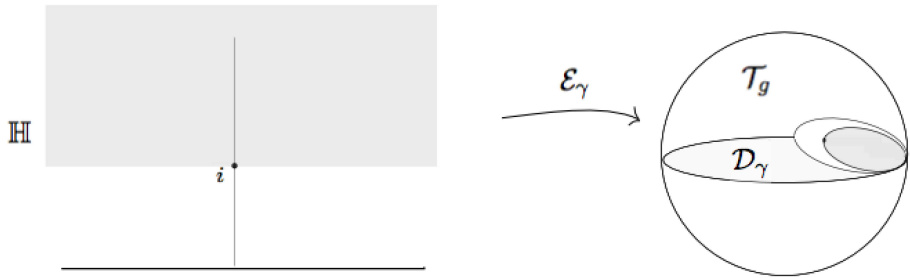}\\
  \caption{A cartoon illustrating the asymptoticity in Theorem \ref{thm:thm1}.}
\end{figure}

As $Im(z) \to \infty$ the conformal structures limit, in either family of deformations, to a noded Riemann surface $\Sigma_\infty$ in the \textit{augmented Teichm\"{u}ller space} $\widehat{\mathcal{T}_g}$. One can regenerate Riemann surfaces in $\mathcal{T}_g$ by the procedure of \textit{plumbing} that glues two punctured disks in a neighborhood $U$  of the node. Apart from the choice of $U$, this also involves one complex ``gluing" parameter and produces a map $\mathcal{P}: \mathbb{H} \cup\{ \infty\} \to \widehat{\mathcal{T}_g}$ such that $\mathcal{P}(\infty) = \Sigma_\infty$ (see \S 5.1). We shall also prove:

\begin{thm}\label{thm:thm2}
For any plumbing disk  $\mathcal{P}$ as above  there exists a Teichm\"{u}ller disk $\mathcal{D}$ such that the two are strongly asymptotic in a horodisk around $\infty \in \bar{\mathbb{H}}$. Moreover, there is a choice of a neighborhood $U_0$ of the node such that the corresponding plumbing disk  $\mathcal{P}_0$ coincides with $\mathcal{D}$. 
\end{thm}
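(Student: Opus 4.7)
The plan is to construct the Teichm\"{u}ller disk $\mathcal{D}$ from a meromorphic Jenkins-Strebel differential on the noded limit $\Sigma_\infty = \mathcal{P}(\infty)$, and then to realise $\mathcal{D}$ itself as a plumbing construction for a specific choice of coordinates and neighborhood. Let $p_1, p_2$ denote the two pre-images of the node on the normalisation of $\Sigma_\infty$, with the given local plumbing coordinates $z_1, z_2$. By the existence theorem for Strebel differentials with prescribed double-pole singular parts, there exists a meromorphic quadratic differential $q_\infty$ on $\Sigma_\infty$ with double poles at $p_1, p_2$ of equal negative residue and all horizontal trajectories closed; the residue determines a common cylinder circumference $a > 0$ which I would tune to the given plumbing scale. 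The horizontal foliation of $q_\infty$ then consists of a pair of half-infinite flat cylinders of circumference $a$ ending at the $p_i$, together with at most finitely many closed-trajectory regions away from the node. Opening the node by gluing the two half-infinite cylinders into a single finite flat cylinder of circumference $a$ and varying height-plus-twist parametrised holomorphically by $z \in \mathbb{H}$ yields a holomorphic family of smooth Riemann surfaces carrying Jenkins-Strebel differentials $q_z$ --- this is the sought Teichm\"{u}ller disk $\mathcal{D}$, with $\mathcal{D}(\infty) = \Sigma_\infty$.

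For the coincidence claim, I would spell out this gluing in coordinates. The natural flat coordinate $\zeta_i$ of $q_\infty$ at $p_i$ exponentiates to a holomorphic coordinate $z_i^* = e^{2\pi \zeta_i / a}$ near $p_i$, and a flat half-infinite cylinder $\{\operatorname{Im} \zeta_i > M\}$ pulls back to a disk $\{|z_i^*| < r_0\}$ of explicit radius $r_0 = e^{-2\pi M/a}$. With these coordinates and the neighborhood $U_0 = \{|z_1^*| < r_0\} \sqcup \{|z_2^*| < r_0\}$, the plumbing identification $z_1^* z_2^* = t$ becomes, in flat coordinates, the Teichm\"{u}ller cylinder-gluing $\zeta_1 + \zeta_2 = (a/2\pi)\log t + \mathrm{const}$; under the identification $t = \lambda\, e^{2\pi i z}$ for the appropriate unimodular $\lambda$, this gives $\mathcal{P}_0(z) = \mathcal{D}(z)$ on the nose.

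It remains to compare a general $\mathcal{P}(z)$ with $\mathcal{D}(z) = \mathcal{P}_0(z)$. The given plumbing coordinates $z_i$ and the flat ones $z_i^*$ differ by biholomorphisms $\phi_i$ fixing $p_i$ with $\phi_i'(0) \neq 0$. I would build an explicit quasi-conformal homeomorphism $h_z : \mathcal{P}(z) \to \mathcal{P}_0(z)$ which is conformal outside a bounded annular neighborhood of the plumbed core curve and, inside, interpolates between $\phi_i$ and the identity in the flat $\zeta_i$-direction, spreading the interpolation across the entire (growing) cylinder so that the resulting Beltrami coefficient has $L^\infty$ norm of order $O(1/\operatorname{Im} z)$. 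This yields $d_\mathcal{T}(\mathcal{P}(z), \mathcal{D}(z)) = O(1/\operatorname{Im} z) \to 0$. The main obstacle I anticipate is constructing this interpolation so that it is simultaneously compatible with the rotational identification of the cylinder, respects the two boundary matching conditions at its ends, and has dilatation vanishing in the limit; a more abstract alternative, parallel to the approach of Theorem~\ref{thm:thm1}, would be to bound $d_\mathcal{T}$ by comparing extremal lengths of simple closed curves on the two surfaces and invoking Wolpert-type pinching estimates for the extremal length of the short curve.
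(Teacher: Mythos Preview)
Your strategy is essentially the paper's: take the Strebel differential on $\Sigma_\infty$, let $U_0$ be the pair of half-infinite cylinders in its flat metric, check that plumbing in these coordinates \emph{is} the Teichm\"uller disk (your coincidence paragraph is exactly Lemma~\ref{lem:pd1}), and then compare an arbitrary $\mathcal{P}$ to $\mathcal{P}_0$ by a quasiconformal interpolation built from the coordinate-change maps $\phi_i$.

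The one place you diverge is in how the comparison map is assembled. You propose to interpolate ``between $\phi_i$ and the identity'' across the whole growing cylinder to get dilatation $O(1/\operatorname{Im} z)$ in one shot. The paper instead uses the Koebe-type interpolation lemma (Lemma~\ref{lem:interp}) on a \emph{fixed} annulus to replace $\phi_i$ by its linearisation $z\mapsto c_i z$; this yields a $(1+\epsilon)$-quasiconformal map $\mathcal{P}(z)\to \mathcal{P}_0(z+ie)$ with $e=\ln c_1+\ln c_2$, and the residual shift $ie$ is then absorbed by a vertical affine stretch of the Jenkins--Strebel cylinder, itself $(1+\epsilon)$-quasiconformal once $\operatorname{Im} z$ is large. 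Your direct route can be made to work, but note that $\phi_i$ is only defined on a fixed disk, so what you are really spreading across the long cylinder is the \emph{translation} part $\zeta_i\mapsto \zeta_i + (a/2\pi)\log c_i$; the genuinely nonlinear part of $\phi_i$ still has to be handled on a bounded region, which is precisely Lemma~\ref{lem:interp}. The paper's two-step decomposition makes this separation explicit.

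One point you should address explicitly: the quasiconformal map must be in the correct homotopy class (i.e.\ preserve the marking). The paper handles this by choosing the conformal identifications of the half-cylinders with $\mathbb{D}^\ast$ so that $c_i\in\mathbb{R}$, and by working inside the ``twist-free'' region of $\phi_i$ guaranteed by Lemma~\ref{lem:bt}; your write-up does not yet say why no spurious Dehn twists are introduced by the interpolation.
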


These results permit a comparison of complex-twists, Teichm\"{u}ller disks, and plumbing near the noded surface. The complex earthquake deformations considered here in fact extend to a larger domain that defines a \textit{complex earthquake disk} that is a   \textit{proper} embedding of the extended domain in $\mathcal{T}_g$ (see \cite{McM}, \cite{EMM}). A fuller investigation of the asymptotic behaviour  at the other limit points of the image, together with the more general case of complex earthquakes along a geodesic \textit{lamination},  are left for subsequent study.\\

Theorem \ref{thm:thm1} is an extension of the multicurve-case of the asymptoticity result in \cite{Gup1}. There, we consider \textit{grafting rays}, which are complex earthquakes in the ``purely imaginary" direction.  Our exposition here hopes to clarify the holomorphic context in which that result really lies. The proof considered the \textit{Thurston metric} associated with a grafted surface and involved constructing quasiconformal maps to appropriate singular-flat surfaces along the Teichm\"{u}ller geodesic ray. The strategy in this article is the same, with the additional consideration of ``shears" or ``twists". The techniques extend easily to provide a proof of Theorem \ref{thm:thm2}.

\bigskip

\textbf{Acknowledgements.} I wish to thank Chris Judge for an inspiring conversation, Yair Minsky for his continued encouragement, and the organizers of the conference at Almora for their invitation.

\section{Background}

For basic references for the expository account in this section see, for example, \cite{Hubb} and \cite{ImTan}.\\

\subsubsection*{Notation}

$\mathbb{H} = \{z \in \mathbb{C} \vert Im(z) >0\}$, $\mathbb{H}_{>T} = \{z \in \mathbb{C} \vert Im(z) >T\}$, $\bar{\mathbb{H}} = \mathbb{H}_{\geq 0} \cup \{\infty\}$. The latter is equipped with the \textit{horocycle topology}, namely, neighborhoods of $\infty$ are the horodisks $\mathbb{H}_{>T}$.

\subsection{Augmented Teichm\"{u}ller space}

The \textit{Teichm\"{u}ller space} for the surface $S$ is the collection of pairs:

\begin{center}
$\mathcal{T}_g = \{(f, \Sigma) \vert$ $ f:S_{g}\to \Sigma$ is a homeomorphism,\\ $\Sigma$ is a Riemann surface $\}$/$\sim$
\end{center}
where the equivalence relation is:
\begin{center}
$(f, \Sigma) \sim  (g, \Sigma^\prime)$
\end{center}
if there is a conformal homeomorphism $h:\Sigma \to \Sigma^\prime$ such that $h\circ f$ is isotopic to $g$.\\

\textbf{Pinching.}  A noded surface $(Z,P)$ is a surface with a set $P$ of distiguished points such that $Z\setminus P$ is a Riemann surface of finite type, and each point in $P$ has an open neighborhood $U_P$ biholomorphic to 
\begin{center}
$\{(z,w)\in \mathbb{C}^2 \vert$ $zw=0$, $\lvert z\rvert$, $\lvert w\rvert <1\}$.
\end{center}
where $P$ maps to $(0,0)\in \mathbb{C}^2$. In this article we shall assume that $P$ consists of a single point. Note that $U_P\setminus P$ is biholomorphic to a pair of punctured disks.\\

\begin{figure}[h]
  \centering
  \includegraphics{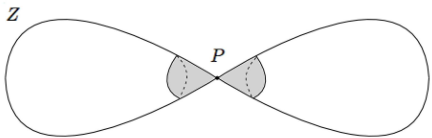}\\
  \caption{A noded surface.}
\end{figure}

Let $\gamma$ be a \textit{loop}, that is, a simple closed curve on $S$ non-trivial in homotopy. A point $(f,\Sigma)$ in Teichm\"{u}ller space admits a \textit{pinching} map $P_\gamma:(\Sigma, \gamma) \to (Z,P)$ to a noded surface that collapses $\gamma$ to $P$. In the Fenchel-Nielsen coordinates, this amounts to setting the length $l(\gamma) =0$.\\

\textbf{The augmented space.} 
For each surface $X\in \mathcal{T}_g$, consider the countably infinite collection $\mathcal{C}(X)$ of noded surfaces obtained by pinching simple closed multicurves, and form the \textit{augmented Teichm\"{u}ller space}:
\begin{equation*}
\widehat{\mathcal{T}_g} = \mathcal{T}_g \cup \{  \mathcal{C}(X) \vert X\in \mathcal{T}_g \} 
\end{equation*}
where note that a noded surface in any $\mathcal{C}(X)$ comes with a marking that is the post-composition of the marking on $X$ with the corresponding pinching map.\\
The topology on $\mathcal{T}_g$ can be extended to the augmented space as follows: for every choice of neighborhood $U_P$ of the node on a surface $Y\in \mathcal{C}(X)$, define the open set:
\begin{center}
$N(Y;U_P, \epsilon)$ = $\{(f, R) \in \mathcal{T}_g\vert$ $\exists$ an annular neighborhood $A$ of the pinching (multi)curve on $R$ and a $(1+\epsilon)$-quasiconformal map $h:R\setminus A \to Y\setminus U_P$ preserving the marking $\}$.
\end{center}
 \medskip
These form a neighborhood basis around $P$. See \S1.3 of \cite{Abi}, for example.

\subsection{Complex earthquakes}

Our exposition shall follow that \cite{McM}, we refer to those papers for details. Since we shall restrict to the case of simple closed curves, these can also be called ``complex twists". (For the general case of a measured geodesic lamination, see for example \cite{EMM}.)\\




\subsubsection*{The real case} Consider a hyperbolic surface $X$ and a simple closed geodesic $\gamma$. A  (real) \text{twist} deformation of $X$ is the one-real-parameter family of hyperbolic surfaces $\mathcal{E}_\gamma(t,X)$ ($t\geq 0$)  obtained by cutting along $\gamma$, and gluing back by a rotation by distance $t\in \mathbb{R}$.  As usual, the sign gives the ``direction" of the twist as determined by the orientation of the surface. In particular, twisting by $t=l_X(\gamma)$ is equivalent to a Dehn-twist: any transverse arc now loops around $\gamma$ an additional time in the positive direction.
 \\

In the universal cover $\widetilde{X} = \mathbb{H}$, identify the vertical axis with a lift  of $\gamma$. The twist at this lift  corresponds to the transformation:

\begin{equation}\label{eq:exp}
  z\mapsto \begin{cases}
    e^{t}z, & \text{if $z\in \mathbb{H}^-$}.\\
    z, & \text{if $z\in \mathbb{H}^+$}.
  \end{cases}
\end{equation}
where $\mathbb{H}^\pm = \{z\in \mathbb{H} \vert \pm Re(z) >0\}$. This local model of the map extends equivariantly to all other lifts. \\

\begin{figure}
  \centering
  \includegraphics{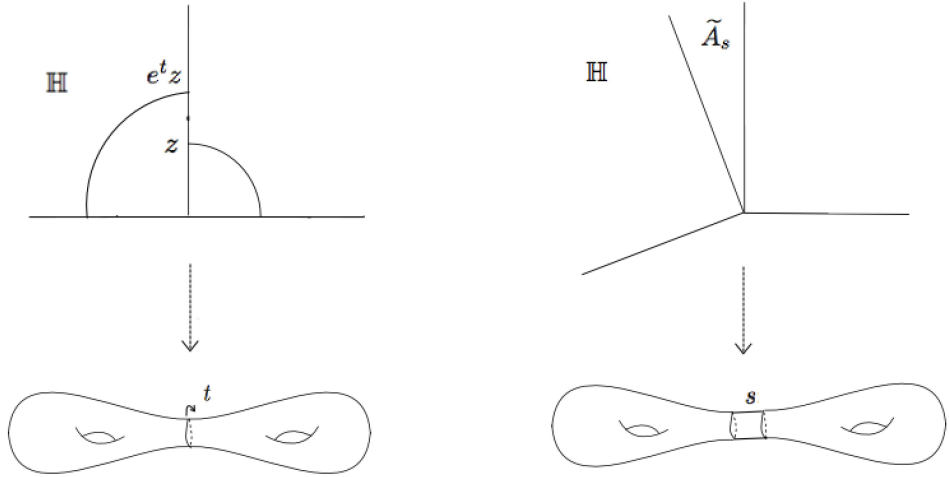}\\
  \caption{Complex twists: a real $t$-twist (left), and $is$-graft (right).}
\end{figure}

\subsubsection*{Grafting} This operation of ``purely imaginary twisting" cuts along $\gamma$ and inserts an annulus. The local model for the map in the universal cover is:

\begin{equation}
  z\mapsto \begin{cases}
    e^{is}z, & \text{if $z\in \mathbb{H}^-$}.\\
    z, & \text{if $z\in \mathbb{H}^+$}.
  \end{cases}
\end{equation}

and the sector of angle $s$:\\
\begin{center}
$\widetilde{A}_s = \{ z\in \mathbb{C}^\ast \vert arg(z) \in [\frac{\pi}{2}, \frac{\pi}{2} + s]\}$
\end{center}
\medskip
is inserted in the space between (see Figure 3). This sector is invariant under the infinite-cyclic subgroup $\langle \gamma \rangle \in PSL_2(\mathbb{R})$ corresponding to $[\gamma] \in \pi_1(X)$. On the surface, this amounts to grafting in the quotient annulus, and the resulting surface is $\mathcal{E}_\gamma(is,X)$.\\

Finally, for a parameter $z = t + is$, a \textit{complex twist}  amounts to composing a twist of amount $t$ by a ``graft" of amount $s$:
\begin{equation}
\mathcal{E}_{\gamma}(t + is, X) = \mathcal{E}_\gamma(is, \mathcal{E}_\gamma(t,X))
\end{equation}
\medskip

\textit{Remark.} In what follows,  $\mathcal{E}_{\gamma}(t + is)$ shall denote $\mathcal{E}_{\gamma}(t + is, X)$ whenever the choice of hyperbolic surface $X$ is understood.\\

\textbf{Thurston metric.} By the previous description, a complex-twist deforms the complex-projective structure on $X$.  On the universal cover $\widetilde{X_t}$ of the resulting surface, we can define a \textit{projective metric} as follows. For $x\in \widetilde{X_t}$ and $v\in T_x\widetilde{X_t}$, the length of the vector is defined to be:\\
\begin{equation*}
t(v) = \inf\limits_{i:\mathbb{D}\to \widetilde{X_t}}  \rho(i^\ast v)
\end{equation*}
where $i$ is a projective immersion such that $x\in i(\mathbb{D})$, and $\rho$ is the Poincar\'{e} metric on $\mathbb{D}$. (See \S2.1 of \cite{Tan}, and \cite{KulPink} for a more general context.)\\
On the grafted regions it is in fact euclidean: in the local model the metric on the sector $\widetilde{A}$ is given by the expression $\frac{\lvert dz\rvert}{\lvert z\rvert}$ that is easily seen to be flat (see \cite{Tan}). In its complement, the metric remains hyperbolic.\\
By the M\"{o}bius invariance of the metric, this descends to a hybrid euclidean-and-hyperbolic metric on the surface called the \textit{Thurston metric}. In particular, the grafted annulus $\widetilde{A}/ \langle \gamma \rangle$ in the quotient is a euclidean cylinder of length $t$. The complementary region is isometric to $X\setminus \gamma$. 

\subsection{Quadratic differentials, flat surfaces and Teichm\"{u}ller disks}

A holomorphic (\textit{resp.} meromorphic) \textit{quadratic differential} on a Riemann surface $\Sigma$ is a $(2,0)$-tensor  locally expressed as $q(z) dz^2$, where $q(z)$ is a holomorphic (\textit{resp.} meromorphic) function. \\
There is a change of coordinates $z\mapsto \xi $ in which the local expression for such a differential is $d\xi^2$. Locally, this is a branched covering of $\mathbb{C}$, and the pullback of the euclidean metric defines a global singular flat metric on $\Sigma$. The foliations by horizontal and vertical lines pull back to the \textit{horizontal} and \textit{vertical foliation}, respectively, on $\Sigma$. The metric has a local ``pronged" structure at the zeroes. A pole of order $2$ with residue $\alpha \in \mathbb{R}$ has a neighborhood isometric to a half-infinite euclidean cylinder of circumference $2\pi \alpha$ (see \cite{Streb}).\\

Conversely, a \textit{flat surface} obtained by identifying parallel sides of an embedded planar polygon by translations or semi-translations  (\textit{i.e}, a translation following by a reflection $z\mapsto -z$) acquires a holomorphic structure  and a quadratic differential induced from the restriction of $dz^2$ to the planar region.\\
Such a planar polygon determines  a cyclically-ordered collection of vectors $\vec{v}_1, \vec{v}_2, \ldots \vec{v}_{2k}$ in  $\mathbb{R}^2$ corresponding to the $2k$ sides, by taking the differences of its endpoints. The above pairing of parallel sides implies that this collection determines at most $k$ directions (\textit{i.e.} vectors upto sign). Conversely, such a collection, together with the pairing data for the sides, determines a flat surface upto a global translation. \\





\textbf{The $SL_2(\mathbb{R})$ action.} The \textit{Teichm\"{u}ller distance} between two points $X$ and $Y$ in $\mathcal{T}_{g}$ is:
\begin{equation}\label{eq:dt}
d_{\mathcal{T}}(X,Y) = \frac{1}{2}\inf\limits_{f} \ln K_f
\end{equation}
where $f:X\to Y$ is a quasiconformal homeomorphism preserving the marking and $K_f$ is its quasiconformal dilatation. See \cite{Ahl} for definitions. The fact that this coincides with the Kobayashi metric on $\mathcal{T}_g$ was shown in (\cite{Royd2}).\\

 Given a flat surface $F$ and  $A\in SL_2(\mathbb{R})$ we can define a new flat surface $A\cdot F$ by the linear action: that is, given by the new collection of vectors $\{A\cdot \vec{v}_1, A\cdot \vec{v}_2, \ldots A\cdot \vec{v}_{2k}\}$ where $F$ is described by the $2k$-tuple of vectors, as before. The image of this orbit is totally geodesic in  $\mathcal{T}_g$ with respect to the Teichm\"{u}ller metric, and since an elliptic rotation does not change the conformal structure, this defines an embedding of  $SL_2(\mathbb{R})/SO(2,\mathbb{R})= \mathbb{H}$ in $\mathcal{T}_g$. \\

In particular, the real-parameter family 
\begin{equation*}
s\mapsto \bigl(\begin{smallmatrix} 1/\sqrt{s}  &0\\ 0& \sqrt{s} \end{smallmatrix} \bigr)  \cdot X
\end{equation*}
is a geodesic ray. (This is the image of the imaginary axis of $\mathbb{H}$ in the identification above.) These transformations have the additional property that they preserve the euclidean \textit{area}. We shall, however, prefer to preserve horizontal lengths by a conformal rescaling of a factor $\sqrt{s}$:

\begin{defn}[Stretches and shears]\label{defn:stsh}  A \textit{stretch} by amount $s$ is the rescaled map:
\begin{equation*}
X\mapsto   \bigl(\begin{smallmatrix} 1&0\\ 0&s \end{smallmatrix} \bigr) \cdot X
\end{equation*}
and a \textit{shear} by an amount $t$ transforms  the flat surface:
\begin{equation*}
X\mapsto   \bigl(\begin{smallmatrix} 1&t\\ 0&1 \end{smallmatrix} \bigr) \cdot X, 
\end{equation*}
where in both cases, the pairing data for the sides remains the same.
\end{defn}

\begin{defn}[Teichm\"{u}ller disk] Given $t+is \in \mathbb{H}$ and a flat surface $X$, a $t$-twist followed by an $s$-stretch determines a surface:
\begin{equation}
\mathcal{D}_\gamma(t + is, X) = \bigl(\begin{smallmatrix} 1 &0\\ 0&s \end{smallmatrix} \bigr) \cdot  \bigl(\begin{smallmatrix} 1&t\\ 0&1 \end{smallmatrix} \bigr) \cdot X
\end{equation} 
where, as usual, the pattern of identifications of the sides remains the same. This defines the \textit{Teichm\"{u}ller disk} $\mathcal{D}_\gamma: \mathbb{H} \to \mathcal{T}_g$ with \textit{basepoint} $X$. As usual we shall drop the second argument when the choice of $X$ is understood.
\end{defn}

\textbf{An example.} Let $S$ be a torus. The Teichm\"{u}ller space $\mathcal{T}_1 = \mathbb{H}$:  any $\tau \in \mathbb{H}$ defines a torus with a complex structure $X_\tau := \mathbb{C}/(\mathbb{Z} \oplus \tau \mathbb{Z})$ together with the marking $\phi:S\to X_\tau$ induced by the linear action $\tilde{\phi}:\mathbb{C}\to \mathbb{C}$ such that $\tilde{\phi}(1)=1$ and $\tilde{\phi}(i)=\tau$. The set of simple closed curves are parametrized by $p/q \in \mathbb{Q}\cup \{\infty\}$, and the augmented space $\widehat{\mathcal{T}_1}$ is obtained by adding the rational points in $\partial \mathbb{H} = \mathbb{R}\cup \{\infty\}$.\\
Let $X:= X_i$ be the square torus, and $\mathcal{D}_\gamma$ the Teichm\"{u}ller disk with basepoint $X$. The image by a ``stretch by amount $s$"  $Y := \mathcal{D}_\gamma(is, X)$ is then a rectangle with sides  $s$ and $1$. The Gr\"{o}tzsch argument then yields $d_\mathcal{T} (X, Y) = \ln s = d_\mathbb{H}(i, is)$. The image by a  ``shear by amount $t$" $\mathcal{D}_\gamma(t, X)$ is a parallelogram with two horizontal sides and two sides of slope $t$, with parallel sides identified. For $t\in \mathbb{Z}$ this corresponds to $t$ Dehn-twists around the  $1/0$-curve (see Lemma \ref{lem:ttwist}). The Teichm\"{u}ller disk in this case coincides with the entire Teichm\"{u}ller space $\mathbb{H}$. 

\section{Preliminaries}

\subsection{Jenkins-Strebel surface, and twists.} 

\begin{defn}\label{defn:jss} A \textit{Jenkins-Strebel surface} is a flat surface $F$  given by the induced metric of a holomorphic quadratic differential whose horizontal foliation has all closed leaves. Such an $F$ can be obtained by taking a euclidean cylinder and gluing the two boundary components by some interval-exchange map. As a flat surface, $F$ then has exactly two sides that are parallel, in what we shall call the ``vertical" direction, and all other sides are parallel or semi-parallel in an orthogonal ``horizontal" direction. 
\end{defn}

\textit{Remark.} It is a theorem of Jenkins and Strebel (\cite{Streb}) that given a Riemann surface and any non-trivial simple closed curve, such a holomorphic quadratic differential exists with the closed horizontal leaves in the homotopy class of that curve.

\begin{defn} The \textit{height} of a Jenkins-Strebel surface is the length of its vertical sides, that is, the  euclidean distance between the horizontal sides of the euclidean cylinder.
\end{defn}

\begin{figure}
  \centering
  \includegraphics{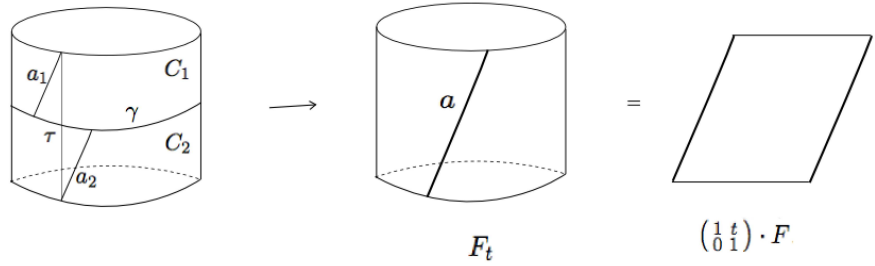}\\
  \caption{A $t$-twist amounts to shearing the Jenkins-Strebel surface (Lemma \ref{lem:ttwist}).}
\end{figure}

We shall use the following terminology:

\begin{defn}\label{defn:ttwist} A \textit{t-twist} is the  operation of cutting along the core-curve of a metric cylinder (embedded in some surface) and gluing back after a positive twist by a distance $t$.
\end{defn}

Note that a flat torus is a Jenkins-Strebel surface. The observation in the previous torus-example can be generalized as follows:

\begin{lem}\label{lem:ttwist} Let $F$ be a Jenkins-Strebel surface of height $1$. Then a $t$-twist is equivalent to a shear by amount $t$. Namely, if $F_t$ is the surface obtained by a $t$-twist, then it is identical to the surface $F^\prime =  \bigl(\begin{smallmatrix} 1&t\\ 0&1 \end{smallmatrix} \bigr) \cdot F$.
\end{lem}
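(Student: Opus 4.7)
The plan is to realize the two operations as literally the same modification of a planar polygon representing $F$, and then to check that the marking is preserved.

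First, I would represent $F$ by the rectangle $R = [0,w]\times[0,1]\subset \mathbb{R}^2$, where $w$ is the circumference of the unique horizontal cylinder (which has height $1$ by assumption). The two vertical sides of $R$ are identified by horizontal translation by $w$, producing the open cylinder of $F$, and the top and bottom horizontal sides are identified via an interval exchange $\phi\colon [0,w]\to[0,w]$ encoding the critical graph of the Jenkins--Strebel differential. Thus $F$ is recovered from $R$ together with the pairing data $(v,\phi)$.

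Next I would unpack both operations in this model. Shearing $F$ by $t$, as in Definition \ref{defn:stsh}, applies $\bigl(\begin{smallmatrix}1&t\\0&1\end{smallmatrix}\bigr)$ to $R$ and produces the parallelogram $P$ with vertices $(0,0), (w,0), (w+t,1), (t,1)$, keeping the pairing data $(v,\phi)$ verbatim --- the two slanted sides identified by horizontal translation by $w$, and the top and bottom by $\phi$. A direct computation gives the top--bottom identification in $P$ as $(u,1)\sim(\phi(u-t),0)$ for $u\in[t,w+t]$. On the other hand, a $t$-twist cuts $R$ along a horizontal core curve at height $y=1/2$ and reglues after shifting the top half by $+t$. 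The resulting L-shaped figure has an internal segment at $y=1/2$ across which the top half is glued to the bottom half by translation by $(t,0)$; collapsing along this segment merges the two vertical cut-sides into single slanted edges and recovers exactly the parallelogram $P$. The top--bottom identification in this collapsed picture, inherited from $R$ via the shift, is again $(u,1)\sim(\phi(u-t),0)$.

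Hence both $F_t$ and $F'$ are represented by the identical planar polygon $P$ with identical side-identifications, so they coincide as flat surfaces. For the markings, I would observe that in each construction the modification is supported in an annular neighborhood of the core curve, and the induced self-homeomorphism of $S$ agrees in both cases with the classical $t$-twist in the Dehn-twist direction along $\gamma$; this is automatic once the two planar descriptions have been matched. The main step, and really the only substantive content of the lemma, is the verification in the previous paragraph that the twisted-and-collapsed polygon is literally the same as the sheared one, with the same interval exchange $\phi$ on the top and bottom; the equality of $F_t$ and $F'$ as marked points in $\mathcal{T}_g$ follows immediately.
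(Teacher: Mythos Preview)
Your proposal is correct and follows essentially the same route as the paper's proof: both arguments exhibit the twisted surface as the sheared parallelogram via an explicit cut-and-paste on the height-$1$ cylinder. The paper phrases this by choosing diagonal arcs $a_1,a_2$ of slope $1/t$ on the two half-cylinders which line up after the twist, then cutting along the resulting arc to obtain the sheared rectangle; your ``collapse of the L-shape'' is the same geometric move in different words, with the added bookkeeping of tracking the interval exchange $\phi$ and the marking, which the paper leaves implicit.
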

\begin{proof}
Consider the euclidean cylinder $C$ obtained by cutting $F$ along the horizontal sides. Choose a vertical arc $\tau$ across $C$, such that $C\setminus \tau$ is a rectangle $R$.  Let $\gamma$ be the the horizontal circle on $C$  along which we cut and twist. Let $C_1$ and $C_2$ be  the components of $C\setminus \gamma$. $\tau$ restricts to the vertical arcs $\tau_1$ and $\tau_2$ on these subcylinders.\\
On $F_t$ the endpoints of $\tau_1$ and $\tau_2$ are displaced by a distance $t$. Now consider the arcs $a_1$ and $a_2$ of slope $1/t$ on $C_1$ and $C_2$ respectively, that each have one endpoint common with $\tau_1$ and $\tau_2$ (see Figure 4). On $F_t$ the arcs $a_1$ and $a_2$ line up to form an arc $a$ of slope $t$. Cutting $F_t$ along $a$ produces a parallelogram that is the image of $R$ when sheared by an amount $t$, and the lemma follows.
\end{proof}

\subsection{Bounded twisting} 

Let $C$ be a half-infinite euclidean cylinder. This admits a ``horizontal" foliation by  circles, and a transverse ``vertical" foliation by perpendicular straight rays.\\

In what follows, we shall often identify such a $C$ with the punctured unit disk $\mathbb{D}^\ast $ via a conformal identification that maps $\infty$ to $0$.  (Note that any two such identifications differ by a rotation, a freedom we shall employ later.) The horizontal leaves are mapped to circles centered at the origin, and the vertical leaves are mapped to radial segments. The euclidean metric is given by the expression $\frac{\lvert dz\rvert}{\lvert z\rvert}$ on $\mathbb{D}^\ast$. Throughout, $C_L$ shall denote the euclidean subcylinder of length $L$ that is adjacent to $\partial C$.

\begin{defn} Let $C$ be a half-infinite cylinder and $\tau$ be any vertical leaf. A continuous map $f:C\to C$ that is a homeomorphism to its image is said to be \textit{eventually twist-free} if  there exists an $L_0>0$ such that for any $L>L_0$, the segment $\tau \cap (C_L\setminus C_{L_0})$ can be homotoped to be disjoint from its image under $f$,  by a homotopy that fixes its endpoints. 
\end{defn}

\textit{Remark.} It can be checked that this definition is independent of the choice of vertical leaf.

\begin{lem}\label{lem:bt} Let $C$ be a half-infinite cylinder, and $g:C\to C$ be a conformal map that is a homeomorphism to its image. Then $g$ is eventually twist-free.
\end{lem}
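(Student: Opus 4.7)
The plan is to conformally identify $C$ with the punctured disk $\mathbb{D}^\ast$ (sending the end of $C$ to $0$), use the removable singularity theorem to extend $g$ to a holomorphic map $\tilde g:\mathbb{D}\to\mathbb{D}$, and then analyze two cases depending on the value of $\tilde g(0)$.

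If $\tilde g(0)=a\neq 0$, then by continuity a small disk about $0$ maps into a neighborhood of $a$ that is bounded away from the puncture. In cylinder terms, the image of $C\setminus C_{L_0}$ is contained in a subcylinder $C\setminus C_{L_0^\ast}$ where $L_0^\ast$ is fixed in terms of $|a|$. Choosing $L_0>L_0^\ast$ makes $\tau\cap(C_L\setminus C_{L_0})$ spatially disjoint from its image, and no homotopy is needed.

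If $\tilde g(0)=0$, then injectivity gives $\tilde g(z)=cz+O(z^2)$ with $c\neq 0$, and Schwarz's lemma forces $|c|\leq 1$. Writing $c=|c|e^{i\theta_0}$ and lifting $g$ along the universal cover $\pi:\mathbb{H}\to\mathbb{D}^\ast$, $\pi(w)=e^{iw}$, a direct computation with the Taylor expansion yields
\begin{equation*}
\hat g(w)=w+\theta_0-i\log|c|+O(e^{-\mathrm{Im}\,w})\quad\text{as }\mathrm{Im}\,w\to\infty,
\end{equation*}
so $\hat g$ is asymptotically a translation. The vertical leaf $\tau$ lifts to $\tilde\tau=\{\mathrm{Re}\,w=0\}$, and the full preimage of $g(\tau\cap(C_L\setminus C_{L_0}))$ in $\mathbb{H}$ is $\bigcup_{k\in\mathbb{Z}}(\hat g(\tilde\tau_{L_0,L})+2\pi k)$, where $\tilde\tau_{L_0,L}$ denotes the lift of the segment. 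If $\theta_0\not\equiv 0\pmod{2\pi}$, each translate lies in a thin vertical strip at positive distance from $\tilde\tau$; taking $L_0$ large enough that the error is smaller than this gap makes $\tilde\tau_{L_0,L}$ already disjoint from every translate, and hence the segment and its image are disjoint in $C$. If $\theta_0\equiv 0\pmod{2\pi}$, the only translate meeting a neighborhood of $\tilde\tau$ is $\hat g(\tilde\tau_{L_0,L})$ itself, lying in a thin neighborhood of $\tilde\tau$ shifted in $\mathrm{Im}$ by $-\log|c|\geq 0$; one pushes the interior of $\tilde\tau_{L_0,L}$ to $\mathrm{Re}\,w>0$ by an amount exceeding the Taylor error but less than $2\pi$, producing a homotopy rel endpoints to an arc disjoint from every translate, which descends to the required homotopy in $C$.

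The main obstacle is the collinear subcase ($\theta_0\equiv 0\pmod{2\pi}$), in which the image and the leaf occupy approximately the same vertical line: the disjointness depends on calibrating the perturbation amplitude against the decay of the Taylor remainder of $\tilde g$ at $0$, and this determines how large $L_0$ must be chosen. The remark that the definition is independent of the choice of vertical leaf is then immediate, since any two vertical leaves in $C$ are related by a real translation in the universal cover $\mathbb{H}$, under which the entire analysis is equivariant.
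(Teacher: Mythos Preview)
Your argument is correct and takes a different, more explicit route than the paper. The paper works directly in polar coordinates on $\mathbb{D}$: writing $g(r,\theta)=(\alpha(r,\theta),\beta(r,\theta))$, it simply notes that $\partial\beta/\partial r$ extends continuously across $r=0$, hence is bounded there, so the total angular change $\int_0^{r_0}(\partial\beta/\partial r)\,dr$ along a radial segment is below $2\pi$ once $r_0$ is small enough and the image cannot wind further. This is quicker but tacitly leans on $\tilde g(0)=0$ and leaves the mechanism somewhat opaque. Your lift to the universal cover with the expansion $\hat g(w)=w+\theta_0-i\log|c|+O(e^{-\mathrm{Im}\,w})$ makes precise the sense in which $g$ is asymptotically a rigid motion of the cylinder, and your separate treatment of $\tilde g(0)\neq 0$ fills in a case the paper does not explicitly address. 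One slip: in that first case the image of $C\setminus C_{L_0}$ lies in $C_{L_0^\ast}$ (the bounded part adjacent to $\partial C$), not in $C\setminus C_{L_0^\ast}$ as written; with that corrected your disjointness conclusion stands. The collinear subcase is the genuinely delicate one, and your push-off sketch is right in spirit, though making it airtight near the endpoints of $\tilde\tau_{L_0,L}$ (where the perturbed arc must return to $\mathrm{Re}\,w=0$ while the image may also pass nearby) needs slightly more care than you indicate.
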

\begin{proof}
By the above conformal identification $g$ can can be thought of as a conformal map from $\mathbb{D}^\ast$ into itself, and can be extended to the missing point to give a conformal embedding $g:\mathbb{D}\to \mathbb{D}$.\\
Milder assumptions involving just the existence and continuity of the derivative at the origin now suffice, and to emphasize that  we shall work in polar coordinates $(r,\theta)$ on $\mathbb{D}$, with respect to which
\begin{equation*}
g(r,\theta) =  (\alpha(r,\theta), \beta(r,\theta))
\end{equation*}
and has a continuous partial derivatives in $\mathbb{D}$.\\
 \begin{figure}
  \centering
  \includegraphics{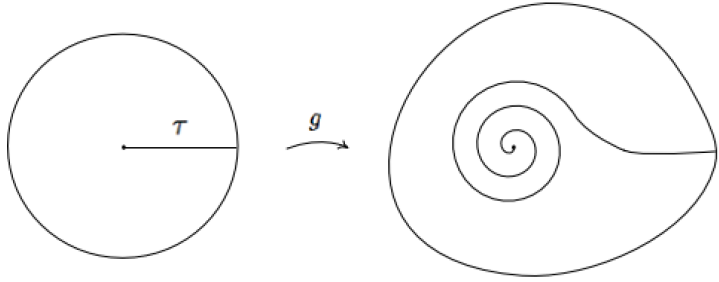}\\
  \caption{ A smooth univalent map $g$ twists finitely many times around the origin (Lemma \ref{lem:bt}).}
\end{figure}

Consider the image of any radial segment $\tau$.  For each ``twist" this arc would have to make an additional essential intersection with $\tau$, going around the origin once.\\
By continuity $ \frac{\partial \beta}{\partial r}(r,0)  \to  \frac{\partial \beta}{\partial r}(0,0) = \alpha  $ as $r\to 0$. For sufficiently small $r$ we have $\left\vert  \frac{\partial \beta(r,0)}{\partial r} \right\vert < 2\alpha$ and consequently the total change of the angle coordinate:
\begin{equation*}
\left\vert \int\limits_0^r \frac{\partial \beta(r,0)}{\partial r} \right\vert < 2\alpha r.
\end{equation*}

 In particular, there exists some $r_0$ such that for $r<r_0$ the above integral has value less than $2\pi$, that is, the image of the sub-segment $[0,r_0]\subset \tau$ does not make any further ``twists" about the origin. In the metric cylinder, the ``twist-free" subdisk $B_{r_0}$ corresponds to the complement of the subcylinder $C_{L_0}$, where  $L_ 0 = \ln \frac{1}{r_0}$.
\end{proof}

 \subsection{Quasiconformal lemmata}
\begin{lem}[Dehn-twist]\label{lem:untwist}

For any $\epsilon>0$ there is an $M_0>0$ such that any annulus $A$ of modulus greater than $M_0$ admits a $(1+\epsilon)$-quasiconformal map $f:A\to A$ fixing the boundary pointwise, that is homotopic to a single Dehn-twist.
\end{lem}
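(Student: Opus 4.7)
The plan is to represent $A$ conformally as a flat Euclidean cylinder $C_M = (\mathbb{R}/\mathbb{Z}) \times [0,M]$, which is possible since $A$ has modulus $M$, and to exhibit the required quasiconformal map as an explicit affine shear matched to the height of the cylinder. Since a conformal change of coordinates preserves quasiconformal dilatation, the boundary-fixing property and the isotopy class rel boundary, it suffices to construct the map on $C_M$ and transport it back to $A$.

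On $C_M$, I would define the shear $f(x,y) = (x + y/M,\, y)$, working in the universal cover $\mathbb{R}\times [0,M]$. This descends to a well-defined smooth self-map of $C_M$ because the shift by $1$ on the line $y=M$ is trivial modulo $\mathbb{Z}$. Directly from the formula, $f$ is the identity on $\{y=0\}$ and induces the identity on the circle $\{y=M\}$ after passing to $\mathbb{R}/\mathbb{Z}$, so $f$ fixes $\partial C_M$ pointwise. In the universal cover the straight-line homotopy from the identity to translation-by-one on $\{y=M\}$ exhibits $f$ as a single positive Dehn twist.

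It remains to estimate the dilatation. Writing $z = x+iy$ and $y = (z - \bar z)/(2i)$, one computes
\begin{equation*}
f(z) \;=\; z + \frac{z - \bar z}{2iM} \;=\; \left(1 - \frac{i}{2M}\right) z \;+\; \frac{i}{2M}\,\bar z,
\end{equation*}
so $f$ is affine-quasiconformal with Beltrami coefficient $\mu_f = i/(2M - i)$. Hence $|\mu_f| = 1/\sqrt{4M^2+1}$ and the dilatation satisfies
\begin{equation*}
K_f \;=\; \frac{1+|\mu_f|}{1-|\mu_f|} \;=\; 1 + O(1/M).
\end{equation*}
Given $\epsilon>0$, it suffices to choose $M_0$ large enough that $(1+|\mu_f|)/(1-|\mu_f|) < 1+\epsilon$ for all $M > M_0$, and to pull $f$ back along the conformal uniformization of $A$.

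No step looks like a serious obstacle; the only mild subtlety is to arrange that the shear fix \emph{both} boundaries pointwise while still realizing a single Dehn twist, and this is what forces the particular slope $1/M$ tuned to the modulus.
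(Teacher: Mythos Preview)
Your proposal is correct and follows essentially the same approach as the paper: uniformize $A$ to a flat cylinder of circumference $1$ and height $M$, and realize the Dehn twist by the affine shear $\bigl(\begin{smallmatrix}1 & 1/M\\ 0 & 1\end{smallmatrix}\bigr)$, which has small dilatation when $M$ is large. Your explicit computation of the Beltrami coefficient is more detailed than the paper's appeal to the map being $(1+\epsilon)$-biLipschitz, but the underlying idea is identical.
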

\begin{proof}
By uniformizing, we can assume $A$ is a euclidean cylinder of circumference $1$ and height $H>M_0$, which can be thought of as a euclidean rectangle $R$ with vertical sides identified. An argument identical to that of Lemma \ref{lem:ttwist} shows that a single Dehn-twist is realized by the linear map  $\bigl(\begin{smallmatrix} 1&t/H\\ 0&1 \end{smallmatrix} \bigr)$. For $M_0$ (and consequently $H$) sufficiently large, this map is  $(1+\epsilon)$-biLipschitz and hence $(1+\epsilon)$-quasiconformal.
\end{proof}


The following lemma is a modified version of Lemma 5.1 of \cite{Gup1}:

\begin{lem}[Interpolation]\label{lem:interp}
Let $g:\mathbb{D}\to\mathbb{C}$ be a univalent conformal map such that $g(0)=0$ and $g^\prime(0)=c$. Then for any $\epsilon>0$ there is a (sufficiently small)  $r>0$ and a $(1+\epsilon)$-quasiconformal map $f:\mathbb{D}\to g(\mathbb{D})$ that restricts to the dilatation $z\mapsto cz$ on $B_r$ and agrees with $g$ on $\mathbb{D}\setminus B_{2r}$. 
\end{lem}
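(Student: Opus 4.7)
The plan is to interpolate between $z\mapsto cz$ on a small inner disk and $g$ on the rest of $\mathbb{D}$ using a smooth radial cutoff, and then verify via a Wirtinger calculation that for small $r$ the Beltrami coefficient is negligible.

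I would first record the Taylor expansion. Setting $\eta(z) := g(z) - cz$, the hypotheses $g(0)=0$ and $g'(0)=c$ give $\eta(0)=\eta'(0)=0$, so holomorphicity provides a constant $C>0$ (depending only on $g$) and a fixed neighborhood of the origin on which $|\eta(z)|\leq C|z|^2$ and $|\eta'(z)|\leq C|z|$. Next, fix once and for all a smooth cutoff $\phi:[0,\infty)\to[0,1]$ with $\phi\equiv 0$ on $[0,1]$, $\phi\equiv 1$ on $[2,\infty)$ and $\|\phi'\|_\infty\leq K$; for each $r$ set $\phi_r(\rho):=\phi(\rho/r)$, so that $\|\phi_r'\|_\infty\leq K/r$. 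Then define
\[ f(z) := cz + \phi_r(|z|)\,\eta(z),\]
which equals $cz$ on $B_r$ and $g(z)$ on $\mathbb{D}\setminus B_{2r}$ by construction.

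The essential computation is the Beltrami estimate on the annulus $B_{2r}\setminus B_r$. Using $\partial_z|z|=\bar z/(2|z|)$, $\partial_{\bar z}|z|=z/(2|z|)$ and the holomorphicity of $\eta$, one finds
\[\partial_z f = c + \phi_r(|z|)\,\eta'(z) + \phi_r'(|z|)\,\frac{\bar z}{2|z|}\,\eta(z), \qquad \partial_{\bar z} f = \phi_r'(|z|)\,\frac{z}{2|z|}\,\eta(z).\]
The Taylor bounds combine with $\|\phi_r'\|_\infty\leq K/r$ to yield $|\partial_z f - c|\leq C'r$ and $|\partial_{\bar z}f|\leq C'r$ on $B_{2r}$ for a new constant $C'$ independent of $r$. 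Consequently $\|\mu_f\|_\infty\leq C'r/(|c|-C'r)$, which can be made arbitrarily small by shrinking $r$; choosing $r$ so that $\|\mu_f\|_\infty\leq \epsilon/(2+\epsilon)$ yields the $(1+\epsilon)$-quasiconformality.

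It remains to verify that $f$ is a homeomorphism $\mathbb{D}\to g(\mathbb{D})$. The image inclusion is immediate: the Koebe $1/4$-theorem ensures $f(B_r)=c\,B_r\subset g(\mathbb{D})$ once $r<1/4$, and $f=g$ outside $B_{2r}$. Since $|\mu_f|<1$ the map $f$ has positive Jacobian and is a local homeomorphism on $\mathbb{D}$; on $\partial B_{2r}$ it restricts to the homeomorphism $g|_{\partial B_{2r}}$ onto the Jordan curve $g(\partial B_{2r})$. A standard degree argument then shows $f$ carries $\overline{B_{2r}}$ homeomorphically onto the enclosed Jordan domain $g(B_{2r})$, and gluing with $f|_{\mathbb{D}\setminus B_{2r}}=g$ delivers the required global homeomorphism. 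I expect the main delicacy to be maintaining the $O(r)$ size of the $\phi_r'\eta$ terms despite $\phi_r'=O(1/r)$, which is precisely where the second-order vanishing $\eta=O(z^2)$ is indispensable.
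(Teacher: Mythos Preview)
Your approach is essentially the same as the paper's: write $g(z)=cz+\psi(z)$, interpolate via $f=cz+\phi_r\psi$ with a radial bump $\phi_r$ vanishing on $B_r$ and identically $1$ outside $B_{2r}$, and check that the Beltrami coefficient is $O(r)$ because the $O(1/r)$ growth of $\phi_r'$ is beaten by the second-order vanishing of $\psi$. The paper only sketches this and cites \cite{Gup1}, invoking the Koebe distortion theorem for the control near $0$; your direct Taylor bound $|\eta|\le C|z|^2$, $|\eta'|\le C|z|$ is an equivalent (and for a fixed $g$, simpler) substitute, and your Wirtinger computation and degree argument supply details the paper omits.
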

\begin{proof}[Sketch of the proof]
Let $g(z) = cz + \psi(z)$, and consider a suitable bump function $\phi_r(z)$ that is identically $1$ on $\mathbb{D} \setminus B_{2r}$ and $0$ on $B_r$. It can be checked that for sufficiently small $r$, the map $f = cz + \phi_r(z)\psi(z)$ works: we refer to the proof of Lemma 5.1 in \cite{Gup1} for details. A key ingredient is the Koebe distortion theorem that controls the behaviour of $g$ close to $0$. (Note that the proof in \cite{Gup1} assumes $g^\prime(0)=1$, which can easily be removed by the rescaling $z\mapsto cz$.) 
\end{proof}

\subsection{Conformal limits}

\begin{defn}[Conformal limit]\label{defn:climit}
Let $\Sigma_n$ ($n\geq 0$) be a sequence of marked Riemann surfaces such that $l_{\sigma_n}(\gamma) \to 0$ where $\gamma$ is  the geodesic representative of a fixed homotopy class of a loop and  $\sigma_n$ is the corresponding sequence of uniformizing hyperbolic metrics. A noded  Riemann surface $Z$ with node $P$ is said to be a \textit{conformal limit} of the sequence if for any $\epsilon>0$ there is an $N>0$ and a sequence of $(1+\epsilon)$-quasiconformal maps
\begin{center}
 $f_{\epsilon,n}:\Sigma_n \setminus \gamma  \to Z\setminus U_n$
 \end{center}
for all $n\geq N$, such that these maps preserve marking, and the neighborhoods $U_n$ of the node shrink to $P$. \\
(Equivalently, $\Sigma_n \to (Z,P)$  in the topology on $\widehat{\mathcal{T}_g}$ mentioned in \S2.1.)
\end{defn}

\medskip
Recall that along a grafting ray $X_s$ ($s\geq 0$) for a loop $\gamma$ the grafted euclidean cylinder gets longer, and we get a limiting Riemann surface:

\begin{defn}[$\text{X}_\infty$] The surface $X_\infty$ is obtained by cutting along the geodesic representative of $\gamma$ on the hyperbolic surface $X=X_0$ and gluing half-infinite euclidean cylinders along the resulting two boundary components. The hyperbolic and euclidean metric is a $C^1$ metric and $X_\infty$ is a noded Riemann surface $(Z,P)$ (with the node $P$ at $\infty$ along the paired cylinders).
\end{defn}

The surface $X_\infty$ is the conformal limit of the grafting ray (see \cite{Gup3}). The following lemma shows that this limit is well defined if one considers other ``diverging"  directions as well:

\begin{lem}\label{lem:elim}
Let $z_n \in \mathbb{H}$ ($n\geq 0$) be any sequence such that $z_n\to \infty$  in the horocycle topology. Then $Z = X_\infty \cup \{P\}$ is the conformal limit of the sequence of Riemann surfaces $\mathcal{E}_\gamma(z_n)$.
\end{lem}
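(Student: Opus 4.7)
The plan is to construct, for large $n$, a piecewise isometric quasiconformal map $f_n$ from $\mathcal{E}_\gamma(z_n)$ minus a narrow middle band of the grafted cylinder to $X_\infty$ minus a shrinking neighborhood of the node, and then to check that it preserves markings. Writing $z_n = t_n + is_n$ and using $\mathcal{E}_\gamma(z_n, X) = \mathcal{E}_\gamma(is_n, \mathcal{E}_\gamma(t_n, X))$ from (2.3), observe that a real twist leaves the hyperbolic metric on $X \setminus \gamma$ unchanged; hence $\mathcal{E}_\gamma(z_n)$ decomposes in the Thurston metric as a hyperbolic piece isometric to $X \setminus \gamma$ glued to a Euclidean cylinder $A_n$ of circumference $\ell := l_X(\gamma)$ and height $s_n$, with the real-twist $t_n$ realised as an extra rotation in one of the two boundary gluings. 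In parallel, $X_\infty$ decomposes as $X \setminus \gamma$ glued to half-infinite Euclidean cylinders $C_\pm$ without rotation.

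Let $A_n'$ be a narrow horizontal band in $A_n$ at height $s_n/2$, and let $U_n$ be the union of the two sub-cylinders of $C_\pm$ at distance greater than $s_n/2$ from the hyperbolic part. Define $f_n : \mathcal{E}_\gamma(z_n) \setminus A_n' \to X_\infty \setminus U_n$ to be the identity on $X \setminus \gamma$ and on the bottom half of $A_n \setminus A_n'$ mapped into $C_-$, and a rotation by exactly $t_n$ on the top half of $A_n \setminus A_n'$ mapped into $C_+$. The rotation is chosen to absorb the twist in the original gluing, making $f_n$ continuous and piecewise isometric, hence $1$-quasiconformal and in particular $(1+\epsilon)$-quasiconformal for any $\epsilon>0$. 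As $s_n \to \infty$, the sets $U_n$ shrink to the node $P$.

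Marking preservation is the main point. By construction $f_n$ commutes with the markings on the hyperbolic part. In a neighborhood of $\gamma$, the composition $f_n \circ m_n$, where $m_n$ denotes the marking of $\mathcal{E}_\gamma(z_n)$, differs from $m_\infty$ only by a rotation of a neighborhood of the node in $X_\infty$ by the angle $2\pi t_n / \ell$. In the local coordinates of the node $\{zw = 0\} \cap \{|z|, |w| < 1\}$ this rotation takes the form $(z, w) \mapsto (e^{i\alpha} z, w)$, and the family $(z, w) \mapsto (e^{ir\alpha} z, w)$ for $r \in [0, 1]$ provides an isotopy to the identity that fixes the node throughout. The principal obstacle is carrying out this marking-preservation argument uniformly when $t_n$ is unbounded; the observation that any such rotation around the noded point is null-isotopic, regardless of angle, makes the construction work uniformly in the real part $t_n$ and yields the conformal limit $X_\infty$.
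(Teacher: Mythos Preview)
Your proof is correct and follows the same idea as the paper's: cut $\mathcal{E}_\gamma(z_n)$ along (a thin band around) the core curve of the grafted cylinder and include each half isometrically into $X_\infty$, so that the complements $U_n$ shrink to the node as $s_n\to\infty$. The paper's argument is a one-line appeal to these ``inclusion maps'' and does not discuss the real part $t_n$ at all; your explicit treatment of marking preservation---absorbing the $t_n$-twist by a rotation on one side and noting that any rotation $(z,w)\mapsto(e^{i\alpha}z,w)$ of the noded neighborhood is isotopic to the identity---fills in a point the paper leaves implicit.
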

\begin{proof}
Recall that $X_n = \mathcal{E}_\gamma(z_n)$ has a grafted euclidean cylinder of length $s= Im(z_n)$. From the definition of the horocycle topology, we have $Im(z_n)\to \infty$ as $n\to \infty$. The sequence of embeddings $i_n: X_n \to X_\infty$ given by the inclusion maps satisfy the conditions of Definition \ref{defn:climit}. In particular, $Im(z_n)\to \infty$ implies $U_n \searrow P$ as required. 
\end{proof}

Consider the Teichm\"{u}ller disk $\mathcal{D}_\gamma :\mathbb{H}\to \mathcal{T}_g$ with basepoint $Y$, where $Y$ is a Jenkins-Strebel surface (Definition \ref{defn:jss}). We also have:

\begin{defn}  Let $Y_\infty$ be the the singular-flat surface obtained by cutting along a horizontal circle on $Y$ and gluing half-infinite euclidean cylinders along the resulting boundary components. Like $X_\infty$, this can also be thought of as a noded surface (with the node $P$ at $\infty$ along the cylinders).
\end{defn}

An argument identical to Lemma \ref{lem:elim} (which we omit) then yields:

\begin{lem}\label{lem:dlim}
Let $z_n \in \mathbb{H}$ ($n\geq 0$) be any sequence such that $z_n\to \infty$  in the horocycle topology. Then $Y_\infty \cup \{P\}$ is the conformal limit of the sequence of Riemann surfaces $\mathcal{D}_\gamma(z_n)$.
\end{lem}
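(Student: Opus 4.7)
My plan is to mirror the proof of Lemma \ref{lem:elim}. The surface $\mathcal{D}_\gamma(z_n)$ contains a flat cylinder of euclidean height $s_n = Im(z_n)$, obtained from the height-one Jenkins-Strebel cylinder of $Y$ by first shearing by $t_n = Re(z_n)$ and then stretching by $s_n$; from the horocycle topology we have $s_n \to \infty$.

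First I would cut $\mathcal{D}_\gamma(z_n)$ along the horizontal core $\gamma_n$ of this cylinder, producing two euclidean subcylinders of height $s_n/2$. Each is conformally embedded into the corresponding half of $Y_\infty$ (the upper or lower subcylinder of $Y\setminus\gamma$ together with the attached half-infinite cylinder): its $\sigma$-boundary is identified with the $\sigma$-boundary of $Y_\infty$, and its cut boundary maps to a horizontal circle deep inside the half-infinite cylinder at euclidean distance $s_n/2$ from the $\sigma$-boundary. The complement of the image in $Y_\infty$ is a neighborhood $U_n$ of the node $P$ consisting of the far ends of the two half-infinite cylinders; since $s_n \to \infty$, we have $U_n \searrow P$ as required by Definition \ref{defn:climit}.

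The step I expect to require the most care is accounting for the shear $t_n$, which produces a horizontal displacement of amount $t_n$ in the $\sigma$-gluing of $\mathcal{D}_\gamma(z_n)$ relative to the unsheared gluing of $Y_\infty$. Since $t_n$ may be unbounded along a horocycle (for example $z_n = n + in$), this cannot be treated as a small perturbation directly. The resolution is that the marking on the noded surface $Y_\infty$ is defined only up to Dehn-twists around the pinched curve $\gamma$, which act trivially on $Y_\infty$; absorbing $\lfloor t_n \rfloor$ such Dehn-twists into the marking representative of $Y_\infty$ leaves only a residual fractional shift in $[0,1)$. This residual is realized by an affine shear on one of the subcylinders of height $s_n/2$ with quasiconformal dilatation $1 + O(1/s_n)$, applying Lemma \ref{lem:untwist} in spirit.

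Composing the conformal inclusions with this residual shear yields, for each $\epsilon > 0$ and all sufficiently large $n$, a marking-preserving $(1+\epsilon)$-quasiconformal map $\mathcal{D}_\gamma(z_n) \setminus \gamma_n \to Y_\infty \setminus U_n$ verifying the conditions of Definition \ref{defn:climit}.
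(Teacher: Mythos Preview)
Your argument is correct and in fact more carefully argued than the paper's, which simply omits the proof as ``identical to Lemma~\ref{lem:elim}'' (i.e.\ use the inclusion maps of the finite cylinders into $Y_\infty$). Your attention to the shear $t_n$ and to marking-preservation is warranted, since the paper does not spell this out.

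That said, the quasiconformal correction you introduce for the fractional twist is unnecessary, and the paper's implicit approach is simpler. After cutting along the core curve, $\mathcal{D}_\gamma(z_n)\setminus\gamma_n$ is a cylinder of height $s_n$ with the two ends identified via $\mathcal{I}$ composed with a rotation by $t_n$; since a rotation of one boundary circle extends to a conformal automorphism of a half-infinite cylinder, this surface is \emph{conformally} isomorphic to the $t_n=0$ version and hence embeds conformally (not merely $(1+\epsilon)$-quasiconformally) into $Y_\infty$. The marking discrepancy introduced by the $t_n$-twist is, on the noded side, a rotation about the puncture~$P$: on a punctured disk any such rotation (by any amount, not just integer multiples of the circumference) is isotopic to the identity rel the outer boundary, because the mapping class group of an annulus rel only one boundary component is trivial. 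Thus the conformal inclusions themselves already preserve the marking, and no residual shear is needed. (A minor point: the number of Dehn twists to absorb would be $\lfloor t_n / l(\gamma)\rfloor$, not $\lfloor t_n\rfloor$, since the circumference is $l(\gamma)$.)
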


See also \S4.1.6 of \cite{Handbook}. 

 \begin{figure}
  \centering
  \includegraphics{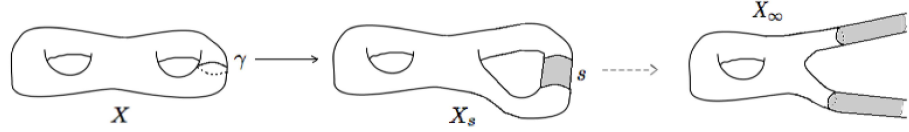}\\
  \caption{The conformal limit along complex-twists as the imaginary part of the parameter $s\to \infty$.}
\end{figure}

\section{Proof of Theorem \ref{thm:thm1}}  

Fix a hyperbolic surface $X$ and a geodesic loop $\gamma$.  Let $l(\gamma)$ denote its length on $X$.\\

Let $\mathcal{E}_\gamma: \mathbb{H}_{\geq 0} \to \mathcal{T}_g$ be the corresponding complex-earthquake deformation. From \S3.3 we have a noded Riemann surface $X_\infty$ such that $\infty \mapsto X_\infty$ defines a  continuous extension $\widehat{\mathcal{E}}_\gamma:\bar{\mathbb{H}}\to \widehat{\mathcal{T}}_g$ in the horocycle topology on $\bar{\mathbb{H}}$.\\

Recall the following theorem of Strebel:

\begin{thm}[Strebel, \cite{Streb}] \label{thm:streb} Let $Z$ be a Riemann surface of genus $g\geq 2$, and $P$ be a collection of $n\geq 1$ points, and   $a_1,a_2,\ldots,a_n$ a tuple of positive reals. Then there exists a meromorphic quadratic differential $q$ on $Z$ with poles at $P$ of order $2$ and  residues $a_1,\ldots a_n$, such that all horizontal leaves (except the critical trajectories) are closed and foliate punctured disks around ${P}$.
\end{thm}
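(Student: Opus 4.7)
The plan is to obtain $q$ as the solution of a simultaneous moduli extremal problem in the style of Jenkins and Strebel. Set $Z^\circ = Z \setminus P$, and for each $p_i$ let $\Gamma_i$ denote the free homotopy class on $Z^\circ$ of a small positively oriented loop around $p_i$. For any system $\mathbf{R}=(R_1,\ldots,R_n)$ of pairwise disjoint open ring domains in $Z^\circ$ with $R_i$ of homotopy type $\Gamma_i$, define
\begin{equation*}
M(\mathbf{R}) \;=\; \sum_{i=1}^{n} a_i^2\,\mathrm{mod}(R_i).
\end{equation*}
The first step is to show that $M_0 := \sup_{\mathbf{R}} M(\mathbf{R})$ is finite and attained. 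Finiteness comes from a standard length-area estimate: any admissible system embeds disjointly in the compact Riemann surface $Z$, so the weighted sum of moduli is bounded in terms of the conformal area of any fixed reference metric. Existence of a maximizer $\mathbf{R}_0$ would then follow by a normal-families argument applied to the uniformizing conformal embeddings of round annuli $\{1<|w|<e^{2\pi m_i}\}$ into $Z$ along a maximizing sequence, using the hypothesis $g\geq 2$ (or more generally the hyperbolic type of $Z \setminus P$) to prevent degeneration of the annular moduli or collisions of the $R_i$.

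Next, I would invoke the structural theorem of Jenkins and Strebel (\cite{Streb}) for simultaneous extremal ring-domain problems: the extremal system $\mathbf{R}_0$ is the horizontal trajectory decomposition of a unique meromorphic quadratic differential $q$ on $Z$, holomorphic on $Z^\circ$, whose characteristic ring domains are precisely the $R_i$. Inside each $R_i$ the non-critical horizontal leaves are closed, and the first-order (Lagrange multiplier) condition coming from the weights $a_i^2$ forces the common $q$-length of these closed leaves in $R_i$ to equal $2\pi a_i$. In a local holomorphic chart $z$ centered at $p_i$, this precisely forces $q = (a_i^2/z^2 + \mathrm{regular})\, dz^2$, so $q$ has a double pole at $p_i$ with residue $a_i$ in the convention of the statement.

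Finally, the extremality of $\mathbf{R}_0$ yields that the complement $Z \setminus \bigcup_i \overline{R_i}$ is exactly the critical graph of $q$, and in particular has empty interior: any open region in the complement through which $q$ has only regular horizontal leaves could be absorbed into one of the $R_i$, strictly increasing $M(\mathbf{R})$ and contradicting extremality. Consequently every non-critical horizontal leaf of $q$ is a closed circle inside a punctured disk neighborhood $R_i \cup \{p_i\}$ of some pole, as required. The main obstacle is the Jenkins--Strebel structural theorem itself: showing that an extremal simultaneous ring-domain system arises as the horizontal decomposition of a single global meromorphic quadratic differential is a substantial analytic argument involving variation of moduli and local uniformization of the horizontal trajectory structure, and I would simply follow the exposition in Strebel's monograph \cite{Streb} at that point.
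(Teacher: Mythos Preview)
The paper does not prove this theorem; it is quoted from Strebel's monograph and used as a black box. So there is no ``paper's own proof'' to compare against.

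That said, your sketch has a genuine gap at the very first step. The curves $\Gamma_i$ are \emph{peripheral}: each bounds a once-punctured disk in $Z^\circ$. In a local coordinate $z$ at $p_i$, the ring domain $\{\epsilon<|z|<r\}$ lies in the homotopy class $\Gamma_i$ and has modulus $\tfrac{1}{2\pi}\log(r/\epsilon)\to\infty$ as $\epsilon\to 0$. Hence $M_0=\sup_{\mathbf R} M(\mathbf R)=+\infty$, and no maximizer exists. The length--area bound you invoke controls moduli of annuli in a fixed homotopy class only when that class is \emph{essential} (does not bound a disk or a punctured disk); for peripheral classes it gives nothing. This is also reflected in the conclusion: the differential $q$ you are after has a double pole at each $p_i$, so $\|q\|_{L^1}=\infty$ and the characteristic ring domains are punctured disks of infinite modulus.

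Strebel's actual argument (see \cite{Streb}, \S23) circumvents this by an exhaustion: one removes disks $\{|z|<\delta\}$ around each $p_i$, solves the (now well-posed) weighted extremal problem on the resulting bordered surface where the $\Gamma_i$ are boundary-parallel and have bounded modulus, and then lets $\delta\to 0$, showing that the resulting Jenkins--Strebel differentials converge locally uniformly on $Z^\circ$ to a meromorphic $q$ with the required second-order poles and residues. An alternative route is via the Hubbard--Masur theorem, prescribing the horizontal measured foliation to be $\sum_i a_i\Gamma_i$. Either way, the naive extremal-modulus formulation must be modified before the Jenkins--Strebel structural machinery applies.
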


Consider such a  Strebel differential on $X_\infty$ with poles of order two at the two punctures and residue $\frac{l(\gamma)}{2\pi}$ at each. Let $Y_\infty$ refer to the singular flat surface obtained by considering the metric induced by the Strebel differential. This surface comprises two half-infinite euclidean cylinders $\mathcal{C}_1$ and $\mathcal{C}_2$  of circumference $l(\gamma)$ with their boundaries identified by a (piecewise-isometric) interval-exchange map $\mathcal{I}$. Note that we have a conformal homeomorphism:
\begin{equation}\label{eq:mapg}
g:X_\infty \to Y_\infty
\end{equation}
that topologically is the identity map.\\

\textit{Notation.} Let $\mathcal{A}_1$ and $\mathcal{A}_2$ be the grafted half-infinite cylinders on $X_\infty$, and let ${A}_{1,L}$ and ${A}_{2,L}$ denote euclidean sub-cylinders on them of length $L$, adjacent to the boundary $\gamma$. Similarly, let $C_{1,L}$ and $C_{2,L}$ denote a sub-cylinders of length $L$ adjacent to $\gamma$ on $\mathcal{C}_{1}$  and $\mathcal{C}_2$ respectively, in $Y_\infty$.

 \begin{figure}
  \centering
  \includegraphics{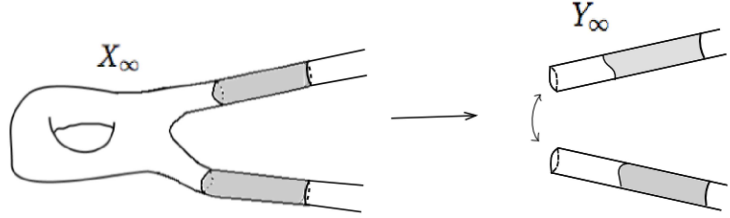}\\
  \caption{The proof of Theorem \ref{thm:thm1} involves adjusting the map (\ref{eq:mapg}) between the conformal limits.}
\end{figure}

\subsection*{The Teichm\"{u}ller disk $\mathcal{D}_\gamma$.}

Consider the surface $Y_0$  obtained by cutting along a meridian of each half-infinite cylinder on $Y_\infty$, and gluing by an isometry such that the cylinder is of length $1$. (That is, in the notation above, by gluing $C_{1, 1/2}$ and $C_{2, 1/2}$.)  Also, this gluing is done without introducing twists:  namely the endpoints of a choice of perpendicular axis on the truncated cylinders are required to match up.\\

The surface $Y_0$ shall form the basepoint of the Teichm\"{u}ller disk $D_\gamma$. Note that $Y_0$ is a Jenkins-Strebel surface (Definition \ref{defn:jss}), obtained by taking a euclidean rectangle ${R}$ of length $l(\gamma)$ and height $1$, identifying the vertical sides by an isometry, and horizontal sides by the interval-exchange map $\mathcal{I}$. By Definition \ref{defn:stsh}, the Teichm\"{u}ller disk:
\begin{equation*}
D_\gamma:\mathbb{H} \to \mathcal{T}_g
\end{equation*}
then takes $i\mapsto Y_0$ and
\begin{equation*}
t + is \mapsto \bigl(\begin{smallmatrix} 1 &0\\ 0& s \end{smallmatrix} \bigr) \cdot  \bigl(\begin{smallmatrix} 1&t\\ 0&1 \end{smallmatrix} \bigr) \cdot R
\end{equation*}
with the corresponding identification of sides of the resulting parallelogram. We shall show this is asymptotic to $\mathcal{E}_\gamma$.

\begin{proof}[Proof of Theorem \ref{thm:thm1}]
Fix a sufficiently small $\epsilon>0$. From the previous discussion in this section it suffices to show that there is an $H>0$ such that if $Im(z) >H$ then there exists a $(1+\epsilon)$-quasiconformal map  $\bar{f:} \mathcal{E}_\gamma(z) \to \mathcal{D}_\gamma(z)$. We build this map as follows:\\

Consider the conformal map $g:X_\infty \to Y_\infty$ as in (\ref{eq:mapg}), and its restrictions $g_1$ and $g_2$ to the half-infinite cylinders $\mathcal{A}_1$ and $\mathcal{A}_2$ respectively. These map to the corresponding half-infinite cylinders $\mathcal{C}_1$ and $\mathcal{C}_2$ on $Y_\infty$ which, by our choice of residue above, have the same circumference. \\

By Lemma \ref{lem:interp}, for each $i=1,2$ we can adjust $g_i$ to a $(1+ \epsilon)$-quasiconformal embedding $f_i:\mathcal{A}_i\to \mathcal{C}_i$  that restricts to $g_i$ on $A_{i,L}$ and to a translation by a distance $e_i$ on $\mathcal{A}_i\setminus A_{i, 2L}$.  (Note that a dilation by factor $c$ is a translation by $\ln c$ in the metric on $\mathbb{D}^\ast$.) Here we choose $L$ sufficiently large such that each $g_i$ is ``twist-free" in $\mathcal{A}_i \setminus A_{i,L}$ (see Lemma \ref{lem:bt}).\\

For any $s>2L$,  cut and glue  the circles $\partial A_{1,s}$ and  $\partial A_{2,s}$  with $t$-twists to get a surface  $\mathcal{E}_\gamma(z)$ where $z := t + i2s$. The image curves $f(\partial A_{1,s})$ and $f(\partial A_{2,s})$ are the circles $\partial C_{1, s+e_1}$ and  $\partial C_{2, s+e_2}$, respectively,  on $Y_\infty$. Cutting and gluing these circles on $Y_\infty$  by  a $t$-twist results in the surface  $\mathcal{D}_\gamma(z^\prime)$ where $z^\prime :=t + i(2s +e_1 + e_2)$. (This is where our choice of the height of $Y_0$ comes in: see Lemma \ref{lem:ttwist}.)\\

\begin{figure}
  \centering
  \includegraphics{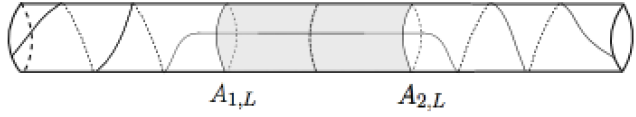}\\
  \caption{The map obtained after gluing the cylinders  is adjusted by post-composing with Dehn-twists supported on the ``twist-free" part (shown shaded).}
\end{figure}

The map $f_i$  on $A_{i,s}$ for $i=1,2$ together with $g$ on the ``hyperbolic part" $X_\infty \setminus (\mathcal{A}_1 \cup \mathcal{A}_2)$  descends to a $(1+\epsilon)$-quasiconformal map $\bar{f}_0$ between the resulting surfaces  $\mathcal{E}_\gamma(z)$  and  $\mathcal{D}_\gamma(z^\prime)$. This is not quite the map we desire, since it may not be homotopic to the identity map. However we can postcompose with a suitable power $N$ of a Dehn-twist $D$ supported on a central cylinder of length $L$,  such that the resulting map is homotopic to the identity (see Figure 8).  Since $D$ is supported on the ``twist-free" part of $g$, the integer $N$ is independent of $\epsilon$ (it depends only on the conformal map (\ref{eq:mapg}). For $L$ large enough, such a  Dehn-twist can be chosen to be $(1+\epsilon)$-quasiconformal by Lemma \ref{lem:untwist}, and the composition $\bar{f}_1 = D^N \circ \bar{f}_0$, is $(1+N\epsilon + \epsilon)$-quasiconformal.\\

A final adjustment is needed to get to $\mathcal{D}_\gamma(z)$: Since $e:=e_1+e_2$ is independent of $\epsilon$, for sufficiently large $s$, we have $\left\vert \frac{s+e}{s}-1\right\vert <\epsilon$, and there is a $(1+ \epsilon)$-quasiconformal vertical affine stretch map  $\sigma: \mathcal{D}_\gamma(z^\prime) \to \mathcal{D}_\gamma(z)$. Postcomposing with this we get a $(1+(N+2)\epsilon)$-quasiconformal map $\bar{f} = \sigma \circ \bar{f}_1: \mathcal{E}_\gamma(z) \to \mathcal{D}_\gamma(z)$ as required (the factor can be absorbed by choosing $\epsilon/(N+2)$ at the beginning of the argument). This completes the construction of $\bar{f}$.\\
 
For the second statement of the theorem, consider the conformal limit $Y_\infty$ of the Teichm\"{u}ller disk $\mathcal{D}_\gamma$ (see Lemma \ref{lem:dlim}). By Theorem 5.4 of \cite{Mondello2}, there is a conformally equivalent ``infinitely-grafted" surface $X_\infty$.  We then have a conformal map $g:X_\infty\to Y_\infty$ as in (\ref{eq:mapg}) and can construct the quasiconformal maps exactly as above.
\end{proof}

\section{Plumbing disks and Theorem \ref{thm:thm2}}

\subsection{Definition of plumbing}
Let $(Z,P)$ be a noded surface. Plumbing is a process of ``opening up" the node, to obtain a closed Riemann surface, as described next (also see  \cite{Kra}).\\

Consider the neighborhood $U_P$ around $P$. As noted earlier, $U_P\setminus P$ is a union of two punctured disks with coordinates $z$ and $w$.\\
Fix $t\in \mathbb{D}^\ast$. From these disks, remove the subdisks $\{ 0<\lvert z\rvert <\sqrt{\lvert t\rvert}\}$ and  $\{0<\lvert w\rvert <\sqrt{\lvert t\rvert}\}$ to get two annuli, and  then glue the circles $\lvert z\rvert, \lvert w\rvert = \sqrt{\lvert t\rvert}$ by the conformal map 
\begin{equation}\label{eq:glue}
w = \frac{t}{z}
\end{equation}
to get a closed surface $\Sigma(t)$.\\
Here, we assume $Z\in \mathcal{C}(X)$ for some $X\in \mathcal{T}_g$ which induces a marking on the plumbed surface (see \S2.1), and hence $\Sigma(t) \in \mathcal{T}_g$.\\

The \textit{plumbing disk} is the map\\
\begin{equation*}
\mathcal{P}:\mathbb{H} \to \mathcal{T}_g
\end{equation*}
that takes
\begin{equation}\label{eq:pdef}
\tau \mapsto \Sigma(e^{i\tau}).
\end{equation}

\begin{figure}[h]
  \centering
  \includegraphics{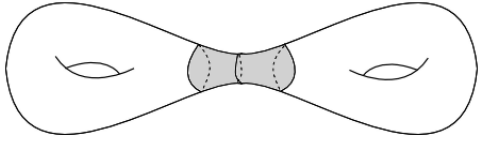}
  \caption{The closed surface obtained by plumbing.}
\end{figure}

\medskip
The surfaces $\mathcal{P}(\tau)$ and $\mathcal{P}(\tau +2\pi)$ differ by a Dehn-twist about the core curve of the plumbed cylinder (\textit{i.e} the gluing in (\ref{eq:glue}) is with an extra twist). In fact $\infty \mapsto (Z,P)$ defines a continuous extension $\mathcal{P}: \mathbb{H}\cup \{\infty\} \to \widehat{\mathcal{T}_g}$ (see the recent work \cite{HubbKoch}). 

\subsection{Plumbing a Strebel surface} Equip $Z\setminus P$ with a Strebel differential $q_\infty$ with residue $1$ at the paired punctures (see Theorem \ref{thm:streb}). Then there is a canonical choice of coordinate neighborhoods  $U_0$ around the punctures: namely, the corresponding half-infinite cylinders $\mathcal{C}_1,\mathcal{C}_2$ (in which  $q_\infty = dz^2$ and $dw^2$ respectively). We call this the \textit{Strebel coordinate} neighborhood of the node $P$. Let $U_0 = \mathcal{C}_1\sqcup \mathcal{C}_2$ (note that $Z\setminus U_0$ has empty interior!). The corresponding plumbing disk  shall be denoted by $\mathcal{P}_0$.\\

Now let $Y_0$ be the closed Riemann surface obtained by, as in \S4,  truncating the half-infinite cylinders  $\mathcal{C}_1$ and $\mathcal{C}_2$ and gluing by a twist-free isometry such that the resulting Jenkins-Strebel surface has height $1$. Consider the Teichm\"{u}ller disk  $\mathcal{D}_0$ with basepoint $Y_0$.

\begin{lem}\label{lem:pd1}
$\mathcal{P}_0$ and  $\mathcal{D}_0$  coincide, that is, $\mathcal{P}_0(z) = \mathcal{D}_0(z)$ for all $ z\in \mathbb{H}$.
\end{lem}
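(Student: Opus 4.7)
The plan is to compare the two constructions directly at the level of the flat cylinder that each inserts into $Z\setminus U_0$ along the boundary circles $\partial\mathcal{C}_1,\partial\mathcal{C}_2$. Both $\mathcal{P}_0(\tau)$ and $\mathcal{D}_0(\tau)$ leave the region $Z\setminus U_0$ untouched, so it is enough to show that for every $\tau\in\mathbb{H}$ the marked flat cylinder glued in place of $U_0$ is the same in both cases.

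First I would translate the plumbing formula into the natural Strebel flat coordinate. Let $\zeta_i$ denote the disk coordinate at puncture $i$ in which the plumbing formula of \S 5.1 is written, and $w_i=-i\log\zeta_i$ the associated flat coordinate, so that $q_\infty=dw_i^{\,2}$ and $\mathcal{C}_i$ is the half-strip $\{\mathrm{Im}(w)>0\}/(w\sim w+2\pi)$ (using residue $1$). The plumbing parameter $t=e^{i\tau}$ removes $\mathrm{Im}(w_i)>\mathrm{Im}(\tau)/2$ from each $\mathcal{C}_i$, and the gluing $\zeta_2=t/\zeta_1$ becomes the affine relation $w_1+w_2=\tau$. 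At the boundary circles $\mathrm{Im}(w_i)=\mathrm{Im}(\tau)/2$ this amounts to the isometry $\mathrm{Re}(w_2)\equiv \mathrm{Re}(\tau)-\mathrm{Re}(w_1)\pmod{2\pi}$. The resulting insertion is a flat cylinder of circumference $2\pi$ and total height $\mathrm{Im}(\tau)$, attached to $\partial\mathcal{C}_1,\partial\mathcal{C}_2$ by the canonical Strebel isometries and with an added relative angular shift of $\mathrm{Re}(\tau)$ between its two ends, measured against the special case $\mathrm{Re}(\tau)=0$.

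Next I would decode the Teichm\"{u}ller disk. Since $Y_0$ is Jenkins-Strebel with central cylinder of circumference $2\pi$ and height $1$, the matrix $\bigl(\begin{smallmatrix}1&0\\0&\mathrm{Im}(\tau)\end{smallmatrix}\bigr)\bigl(\begin{smallmatrix}1&\mathrm{Re}(\tau)\\0&1\end{smallmatrix}\bigr)$ of Definition \ref{defn:stsh} produces a flat cylinder of circumference $2\pi$ and height $\mathrm{Im}(\tau)$. By Lemma \ref{lem:ttwist} applied to the height-$1$ cylinder of $Y_0$, the shear by $\mathrm{Re}(\tau)$ is literally a $\mathrm{Re}(\tau)$-twist along the core curve, and the subsequent vertical stretch by $\mathrm{Im}(\tau)$ preserves angular identifications. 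Consequently $\mathcal{D}_0(\tau)$ is obtained from $Z\setminus U_0$ by inserting a flat cylinder of circumference $2\pi$ and height $\mathrm{Im}(\tau)$ glued via the twist-free isometries defining $Y_0$, modified by a relative twist of $\mathrm{Re}(\tau)$ between its two ends. This is manifestly the same marked flat surface as the one produced by $\mathcal{P}_0(\tau)$.

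Two sanity checks pin down the conventions: at $\tau=i$ we have $t=e^{-1}$, the removed subcylinders are exactly $\mathrm{Im}(w_i)>1/2$ and the gluing carries no angular shift, so $\mathcal{P}_0(i)=Y_0=\mathcal{D}_0(i)$; and under $\tau\mapsto\tau+2\pi$ both constructions acquire a single Dehn twist about the core curve, matching their monodromies around $\infty$. The main obstacle I expect is not geometric but combinatorial: correctly tracking the orientation-reversal implicit in the plumbing formula $w_1+w_2=\tau$ against the ``twist-free'' convention used to build $Y_0$, so that the marked surfaces are identified on the nose rather than differing by a non-trivial cylinder automorphism.
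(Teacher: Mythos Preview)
Your proposal is correct and follows essentially the same route as the paper's proof: both translate the plumbing in the Strebel coordinate into the flat-cylinder picture (you via the logarithmic coordinate $w=-i\log\zeta$, the paper via the metric $\lvert dz\rvert/\lvert z\rvert$ on $\mathbb{D}^\ast$), read off the height $\mathrm{Im}(\tau)$ and twist $\mathrm{Re}(\tau)$ of the resulting cylinder, and then invoke Lemma~\ref{lem:ttwist} to identify this with the shear-and-stretch defining $\mathcal{D}_0(\tau)$. Your version is more explicit and rightly flags the orientation convention hidden in the gluing $w_1+w_2=\tau$, a point the paper passes over in silence.
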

\begin{proof}
Recall the conformal identification of each half-infinite cylinder $\mathcal{C}_1$ and $\mathcal{C}_2$ with the punctured disk $\mathbb{D}^\ast$ equipped with the conformal metric $\frac{\lvert dz\rvert}{\lvert z\rvert}$. For each $i=1,2$ the subdisk $B_r$ in these coordinates corresponds to the half-infinite subcylinder  $\mathcal{C}_i \setminus C_{i,L}$ where $L = \ln \frac{1}{r}$.\\
The plumbed  surface $\mathcal{P}_0(t + is) = \Sigma(e^{- s}e^{i t})$ is obtained by excising the subdisk of radius $e^{-s/2}$ and gluing the resulting boundary by a rotation by angle $t$. In the flat (cylindrical) metric, this corresponds to removing $\mathcal{C}_{i,s/2}$ from $\mathcal{C}_i$ and gluing the boundary components of the resulting surface by a $t$-twist (see Definition \ref{defn:ttwist}). Using Lemma \ref{lem:ttwist} and Definition \ref{defn:stsh} the resulting surface is $\mathcal{D}_0(t + is)$, that is, $Y_0$ postcomposed with a $t$-shear and a $s$-stretch (in that order). \end{proof}

\subsection{Comparing plumbing disks}

Let $(Z,P)$ be a noded Riemann surface, and consider a neighborhood $U_P$ of the node different from the Strebel coordinate neighborhood $U_0$ as in the previous section. Let $\mathcal{P}:\mathbb{H}\to \mathcal{T}_g$ be the corresponding plumbing disk.

\begin{lem}\label{lem:pd2} Given $\mathcal{P}$ and $\mathcal{P}_0$ as above there exists an $e\in \mathbb{R}$ such that  for any $\epsilon>0$ there is an $H>0$ such that 
\begin{equation*}
d_\mathcal{T}(\mathcal{P}(z), \mathcal{P}_0(z + ie))<\epsilon
\end{equation*}
for all $Im(z) >H$.
\end{lem}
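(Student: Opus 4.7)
The plan is to follow the strategy of Theorem \ref{thm:thm1}, comparing the two plumbings of the common noded surface $(Z,P)$ via quasiconformal maps on the relevant ``twist-free'' cylinders. Both $\mathcal{P}$ and $\mathcal{P}_0$ differ only in the choice of local coordinate around $P$, and on a punctured neighborhood of $P$ contained in both $U_P$ and $U_0$ the two coordinate systems are related by a conformal identification. In cylinder coordinates on each branch this is a univalent embedding $g_i: \mathcal{A}_i \to \mathcal{C}_i$ ($i=1,2$), where $\mathcal{A}_i$ is the $U_P$-cylinder and $\mathcal{C}_i$ the Strebel cylinder from \S4. I would first invoke Lemma \ref{lem:bt} to conclude that each $g_i$ is eventually twist-free, and then Lemma \ref{lem:interp} to produce, for any $\epsilon>0$, a $(1+\epsilon)$-quasiconformal replacement $f_i : \mathcal{A}_i \to \mathcal{C}_i$ which agrees with $g_i$ near $\partial \mathcal{A}_i$ and coincides with an affine transformation---a translation by a real constant $e_i$ combined with a rotation by $a_i$---on the deep end of the cylinder. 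I set $e := e_1 + e_2$ and $a := a_1 + a_2$.

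The main assembly step is, for $\tau \in \mathbb{H}$ with $\mathrm{Im}(\tau) = s$ large, to excise the appropriate subcylinders (of length $s/2$ each) from $\mathcal{A}_1$ and $\mathcal{A}_2$, reglue with twist $\mathrm{Re}(\tau)$ to build $\mathcal{P}(\tau)$, and then apply $f_i$ on each piece together with the identity on $Z \setminus (\mathcal{A}_1 \cup \mathcal{A}_2)$. The result is a $(1+\epsilon)$-quasiconformal map whose target, transcribed in Strebel coordinates, is the surface obtained by excising subcylinders of length $s/2 + e_i$ and regluing with twist $\mathrm{Re}(\tau) + a$. By Lemma \ref{lem:pd1} this is $\mathcal{P}_0(\tau + a + ie)$ up to a bounded power $N$ of Dehn-twist about the core curve, accounting for the finite residual twisting of each $g_i$. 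Postcomposing with the inverse Dehn-twist---which is $(1+\epsilon)$-quasiconformal by Lemma \ref{lem:untwist} when supported on a long enough central subcylinder---and a final $(1+\epsilon)$-affine vertical stretch (as in the proof of Theorem \ref{thm:thm1}) gives a marking-preserving $(1+(N+2)\epsilon)$-quasiconformal map $\mathcal{P}(\tau) \to \mathcal{P}_0(\tau + a + ie)$.

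To pass from this complex shift $a + ie$ to the purely imaginary shift $ie$ of the lemma statement, I would use that $\mathcal{P}_0$ is a Teichm\"{u}ller disk by Lemma \ref{lem:pd1}, hence totally geodesic in $\mathcal{T}_g$, so that $d_\mathcal{T}(\mathcal{P}_0(\tau + a + ie),\,\mathcal{P}_0(\tau + ie)) = d_\mathbb{H}(\tau + a + ie,\,\tau + ie) = O(|a|/s)$ as $s \to \infty$. The triangle inequality then yields $d_\mathcal{T}(\mathcal{P}(\tau), \mathcal{P}_0(\tau + ie)) < \epsilon$ for $\mathrm{Im}(\tau) > H$ provided $H$ is chosen sufficiently large. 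The hard part will be bookkeeping the Dehn-twist exponent $N$, which governs the quasiconformal distortion but depends only on the conformal transition $g_i$ and is hence independent of $\epsilon$; absorbing this constant by an initial choice of $\epsilon/(N+2)$ and $H$ large enough completes the argument.
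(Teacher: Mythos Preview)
Your proposal is correct and follows essentially the same route as the paper, but the paper's argument is leaner in three places. First, since the conformal identification of a half-infinite cylinder with $\mathbb{D}^\ast$ is only well-defined up to a rotation, the paper chooses this rotation on each side so that $c_i := \phi_i'(0)$ is \emph{real}; this kills your rotation parameters $a_i$ at the outset and removes the need for the final hyperbolic-distance estimate $d_\mathbb{H}(\tau+a+ie,\tau+ie)\to 0$. Second, because the map on $Z\setminus U_P$ is the \emph{identity} (rather than a nontrivial conformal map as in Theorem~\ref{thm:thm1}) and the interpolation region $B_{r_0}$ is chosen inside the twist-free zone of $\phi_i$, the assembled map is already marking-preserving and no Dehn-twist correction is needed ($N=0$). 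Third, the target of the assembly is $\mathcal{P}_0(\tau+ie)$ \emph{exactly}, so the vertical stretch is also unnecessary --- the statement of the lemma already carries the shift $+ie$. Your extra steps are harmless, but recognizing these simplifications drops the final dilatation from $1+(N+2)\epsilon$ down to $1+\epsilon$.
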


\begin{figure}
  \centering
  \includegraphics{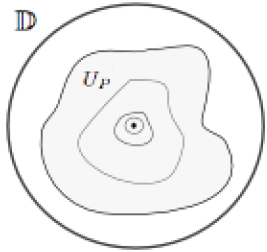}\\
  \caption{By standard distortion theorems, small circles in the coordinate $U_P$ are almost-circular in the Strebel coordinate. This principle underlies the proof of Lemma \ref{lem:interp}, that is used in Lemma \ref{lem:pd2}.}
\end{figure}

\begin{proof}
As before, identify each of the two components of $U_P\setminus P$ with a punctured disk $\mathbb{D}^\ast$. For some $0<r_0<1$ sufficiently small, the inclusion map $i:B_{r_0}\to Z$ maps into the Strebel coordinate $U_0$ as in the previous section. Via the conformal identification of each component of $U_0$ with $\mathbb{D}^\ast$, one obtains for each $i=1$ and $2$, a conformal map $\phi_i:B_{r_0} \to \mathbb{D}$ such that $\phi_i(0)=0$ and $\phi_i^\prime(0)=c_i$.  (As usual we extend the conformal map across the puncture.) We shall assume that $r_0$ is sufficiently small such that $B_{r_0}$ is contained in the ``twist-free" neighborhood of $0$ for $\phi_i$ (see Lemma \ref{lem:bt}.) \\

Note that the derivatives $c_i$ depend only on the choice of $U_P$ and are independent of $r_0$. Moreover, by postcomposing the conformal identification with the punctured disk with the right ``rotation", one can assume $c_i\in \mathbb{R}$. \\

For $i=1,2$ apply Lemma \ref{lem:interp} to obtain a $(1+\epsilon)$-quasiconformal map $f_i:B_{r_0} \to \mathbb{D}$ that interpolates between $\phi_i$ on $\partial B_{r_0}$ and a dilatation $z\mapsto c_iz$ on $B_{r_1}$, for some sufficiently small $r_1<r_0$.\\

Let $r<r_1$. The map $f_i$ then takes $B_{r}$ to the subdisk $B_{c_ir}$. Consider the plumbed surface obtained by excising $B_r \subset\mathbb{D}$ (in the coordinate $U_P$) and gluing by a $t$-rotation. By  definition (see \S5.1) this surface is $\mathcal{P}(t + is)$ for $ s = 2\ln \frac{1}{r}$. (The factor $2$ appears as $r$ is the square-root of the plumbing parameter as in (\ref{eq:glue}).)\\
Similarly, the plumbed surface obtained by excising $B_{c_1r}$ and $B_{c_2r}$ (in the two disks of the coordinate $U_0$) and gluing by a $t$-rotation is identical to $\mathcal{P}_0(t +  i(s + e))$ where $e= \ln c_1 + \ln c_2$ and $s$ is as above.\\

The maps $f_i$ restricted to $B_{r_0} \setminus B_{r}$ (on each component of $U_P\setminus P$) together with the identity map on the rest of the surface extends across the boundary circles $\partial B_r$ (since the amount of twist is the same). This  defines a $(1+\epsilon)$-quasiconformal map $\hat{f}: \mathcal{P}(t + is) \to \mathcal{P}_0(t +  i(s + e))$ as required. Note that since $r<r_0$ we have  $s > H$ where $H:=2\ln \frac{1}{r_0}$.  \end{proof}

\subsection{Concluding the proof}
\begin{proof}[Proof of Theorem \ref{thm:thm2}]

Given a noded Riemann surface $(Z,P)$ and a plumbing disk $\mathcal{P}:\mathbb{H} \to \mathcal{T}_g$ consider the plumbing disk $\mathcal{P}_0$ determined by the Strebel coordinates and the corresponding Teichm\"{u}ller disk $\mathcal{D}_0$ as in \S5.2.  This $\mathcal{D} := \mathcal{D}_0$ shall be the Teichm\"{u}ller disk in the statement of the theorem. \\

Fix a sufficiently small $\epsilon>0$. By Lemma \ref{lem:pd1} ,  $\mathcal{P}_0$ =  $\mathcal{D}_0$ on $\mathbb{H}$. This gives the second statement of the theorem. By Lemma \ref{lem:pd2}, we then have:
\begin{equation}\label{eq:peq1}
d_\mathcal{T}(\mathcal{P}(z), \mathcal{P}_0(z + ie))= d_\mathcal{T}(\mathcal{P}(z), \mathcal{D}_0(z + ie))<\epsilon
\end{equation}
when $Im(z)$ is sufficiently large.\\
 Since $e$ is independent of $z$ and $\epsilon$ (it depends only on the choice of $\mathcal{P}$), we have that $\beta := \left\vert \frac{Im(z+ie)}{Im(z)} \right\vert $ satisfies:
 \begin{equation*}
 \left\vert \beta -  1\right\vert  < \epsilon 
 \end{equation*}
 when $Im(z)$ is sufficiently large. A vertical affine stretch by a factor $\beta$ of the Jenkins-Strebel surface then defines a $(1+ \epsilon)$-quasiconformal map from $ \mathcal{D}_0(z + ie)$ to $ \mathcal{D}_0(z)$. Hence we have:
 \begin{equation}\label{eq:peq2}
 d_\mathcal{T}(\mathcal{D}_0(z + ie), \mathcal{D}_0(z))\leq \frac{1}{2} \ln (1+\epsilon) < \epsilon
 \end{equation}
since $\epsilon$ is sufficiently small, and
\begin{equation*}
 d_\mathcal{T}(\mathcal{P}(z), \mathcal{D}_0(z))<2\epsilon
 \end{equation*}
 follows from (\ref{eq:peq1}), (\ref{eq:peq2}) and the triangle inequality (we can choose a smaller $\epsilon$ to absorb the constant factor). \end{proof}

\bibliographystyle{amsplain}
\bibliography{qmref}

\end{document}